\definecolor{darkblue}{rgb}{0.13,0.13,0.39}%{0.03,0.03,0.265}
\newtheorem{thm}{Theorem}
\newtheorem{lem}{Lemma}[section]
\newtheorem{prop}[lem]{Proposition}
\theoremstyle{definition}
\newtheorem{rem}[lem]{Remark}
\newtheorem*{rem*}{Remark}
\newcounter{assum}
\newcommand{\I}{{\rm i}}
\newcommand{\pp}{\mathbb{P}}
\newcommand{\ee}{\mathbb{E}}
\newcommand{\rr}{\mathbb{R}}
\newcommand{\nn}{\mathbb{N}}
\newcommand{\aip}{\mathcal{A}_2}
\newcommand{\aipo}{\mathcal{A}_1}
\newcommand{\p}{\partial}
\newcommand{\uno}[1]{\mathbf{1}_{#1}}
\newcommand{\ep}{\varepsilon}
\newcommand{\vs}{\vspace{6pt}}
\newcommand{\wt}{\widetilde}
\newcommand{\K}{K_{\Ai}}
\newcommand{\sT}{\mathsf{T}}
\newcommand{\sK}{\mathsf{K}}
\newcommand{\sU}{\mathsf{U}}
\newcommand{\sV}{\mathsf{V}}
\newcommand{\tsWo}{\mathsf{W_3}}
\newcommand{\tsWt}{\mathsf{W_4}}
\newcommand{\sWo}{\mathsf{W_1}}
\newcommand{\sWt}{\mathsf{W_2}}
\newcommand{\sP}{\mathsf{P}}
\newcommand{\sI}{\mathsf{I}}
\DeclareMathOperator{\Ai}{Ai}
\DeclareMathOperator{\tr}{tr}
\newcommand{\gref}[1]{\ref*{g-#1} of \cite{cqr}}
\newcommand{\geqref}[1]{(\ref*{g-#1}) in \cite{cqr}}
\numberwithin{equation}{section}
\let\oldmarginpar\marginpar
\renewcommand\marginpar[1]{\-\oldmarginpar[\raggedleft\footnotesize #1]%
{\raggedright{\small\textsf{#1}}}}
\begin{document}

\title[Local behavior and hitting probabilities of \texorpdfstring{Airy$_1$}{Airy1}]{%
  Local behavior and hitting probabilities of the \texorpdfstring{Airy$_1$}{Airy1} process}

\author{Jeremy Quastel}
\address[J.~Quastel]{
  Department of Mathematics\\
  University of Toronto\\
  40 St. George Street\\
  Toronto, Ontario\\
  Canada M5S 2E4} \email{quastel@math.toronto.edu}
\author{Daniel Remenik}
\address[D.~Remenik]{
  Department of Mathematics\\
  University of Toronto\\
  40 St. George Street\\
  Toronto, Ontario\\
  Canada M5S 2E4 \newline \indent\textup{and}\indent
  Departamento de Ingenier\'ia Matem\'atica\\
  Universidad de Chile\\
  Av. Blanco Encala\-da 2120\\
  Santiago\\
  Chile} \email{dremenik@math.toronto.edu}

\maketitle

\begin{abstract}
  We obtain a formula for the $n$-dimensional distributions of the Airy$_1$ process in
  terms of a Fredholm determinant on $L^2(\rr)$, as opposed to the standard formula which
  involves extended kernels, on $L^2(\{1,\dots,n\}\times\rr)$. The formula is analogous to
  an earlier formula of \citet{prahoferSpohn} for the Airy$_2$ process. Using this formula
  we are able to prove that the Airy$_1$ process is H\"older continuous with exponent
  $\frac12-$ and that it fluctuates locally like a Brownian motion. We also explain how
  the same methods can be used to obtain the analogous results for the Airy$_2$ process. As
  a consequence of these two results, we derive a formula for the continuum statistics of
  the Airy$_1$ process, analogous to that obtained in \cite{cqr} for the Airy$_2$ process.
\end{abstract}

\section{Introduction and Main Results}

\subsection{General background}

The Airy processes are stochastic processes which are expected to govern the asymptotic
spatial fluctuations in a wide variety of random growth models on a one dimensional
substrate, top lines of non-intersecting random walks and free energies of directed random
polymers in 1 + 1 dimensions (all belonging to the Kardar-Parisi-Zhang, or KPZ,
universality class \cite{kpz}). They are non-Markovian and are defined in terms of their
finite-dimensional distributions, which are given by determinantal formulas.  These
formulas, which have been derived by asymptotic analysis of exact formulas in special
discrete models such as the totally asymmetric simple exclusion process and the
polynuclear growth model, give the $n$-dimensional distributions in terms of Fredholm
 determinants of extended kernels, on $L^2(\{1,\dots,n\}\times\rr)$.  The exact results are
then conjecturally extrapolated to more general processes in the universality class which
do not possess the same exact solvability.

The particular Airy process arising in each case depends on the initial data, and this
picks out a number of KPZ sub-universality classes.  For reasons of scaling invariance,
there are three special pure initial data classes: {\it narrow wedge, flat, and
  equilibrium.}  {\it Narrow wedge} corresponds to point-to-point polymers, or growth
models where the exponential of the height is initially a Dirac delta.  Physically, one
starts with curved, or droplet, initial data. After some time $t$, the height looks like a
parabola in space, corresponding to the deterministic evolution, on top of which is
approximately an Airy$_2$ process \cite{prahoferSpohn} with amplitude $t^{1/3}$ and
varying on a spatial scale of $t^{2/3}$.  {\it Flat} corresponds to point-to-line
polymers, or growth models with constant initial data. At time $t$, one sees spatially the
Airy$_1$ process \cite{sasamoto}, again with size $t^{1/3}$ and varying on spatial scale
$t^{2/3}$.  {\it Equilibrium} corresponds to growth models starting from equilibrium,
which in the KPZ universality class means approximately a two-sided Brownian motion.  At a
later time one sees spatially the Airy$_{{\rm stat}}$ process
\cite{baikFerrariPeche}. Note that all these descriptions are modulo a global height shift
which is non-trivial itself, and can be very large compared to the scales on which these
fluctuations are observed.

There are also three other basic mixed initial data, corresponding to starting with one of
the basic three geometries to the left of the origin and another one to the right.  The
resulting spatial fluctations are still of size $t^{1/3}$ and on a spatial scale of
$t^{2/3}$, with non-homogeneous crossover Airy processes Airy$_{2\to1}$ \cite{bfs},
Airy$_{1\to{\rm stat}}$ \cite{bfsTwoSpeed} and Airy$_{2\to{\rm stat}}$
\cite{sasImamPolyHalf,corwinFerrariPeche}, the names being self-explanatory.  Of course,
there will be other less commonly seen sub-universality classes, but these six are the
basic ones, and, interestingly, all have determinantal
finite-dimensional distributions.

Although the determinantal formulas arise naturally in deriving the finite-dimensional
distributions from the special solvable discrete models, they are cumbersome for the
analysis of properties of these processes involving short range scales. For example, one
would expect to be able prove the pathwise continuity directly by just checking the
Kolmogorov continuity criterion using the determinantal formula for the two point
distributions with extended kernel on $L^2(\{1,2\}\times\rr)$.  This turned out to be
surprisingly difficult, and has been an open problem since the processes were
introduced. For the Airy$_2$ process, which is in some sense the most basic one, what was
done historically was to study the probability measure on the point processes obtained by
sampling the Airy line ensemble at a finite set of times.  \citet{prahoferSpohn} proved
the continuity of the Airy line ensemble as a point process, from which the continuity of
the top line, the Airy$_2$ process, would follow if one knew that the points came from a
non-intersecting line ensemble. However, this was not known at the time (though it is now,
see \cite{corwinHammond}). \citet{Johansson} proved the tightness of an approximating line
ensemble (the multilayer PNG model), which in particular implied the continuity of the
Airy$_2$ process.

On the other hand, the other processes do not arise easily as top lines of line ensembles.  For example,
for the Airy$_1$ process, which will be our main example in this article,  
 even the continuity remained open. 
 
 One also hopes to study variational problems involving the Airy processes.  These arise
 naturally.  A well-known example is the famous result of \citet{Johansson} that the
 supremum of the Airy$_2$ process minus a parabola has the Tracy-Widom GOE distribution
 \cite{tracyWidom2}. There is also a generalization of this \cite{qr-airy1to2} that the
 same supremum on a half-line is given by the one point marginal of the Airy$_{2\to1}$
 process. Variational problems naturally involve infinitely many spatial points, so
 formulas giving the distribution of $n$ sample points in terms of determinants of
 extended kernels on $L^2(\{1,\dots,n\}\times\rr)$ are not a good tool.  In \cite{cqr} we
 introduced a continuum formula for the Airy$_2$ process, which gives the probability that
 the process lies below a given function on an arbitrary finite interval, in terms of a
 Fredholm determinant of the solution operator of a certain boundary value problem. The
 formula is obtained as a fine mesh limit of an older formula of \citet{prahoferSpohn} for
 the $n$-dimensional distributions (see \eqref{eq:airy2fd} below). The advantage of the
 alternative formula for variational analysis is that its complexity is no longer
 diverging with the number of spatial points.  Using this formula, we were able to give a
 direct proof of Johansson's result \cite{Johansson}, study the half line version
 \cite{qr-airy1to2}, and derive an exact formula for the probability density of the argmax
 of the Airy$_2$ process minus a parabola, the {\it polymer endpoint distribution}
 \cite{mqr}.
 
%  It was quickly pointed out that the continuum formula for the Airy$_2$ process must be
%  some sort of pathwise version of the Karlin-McGregor formula, and this has now been made
%  precise \cite{bcr}. On the other hand, if one understands this as a reason for the
%  existence of such a formula, it suggests that there should {\it not} be an analogous
%  formula for the Airy$_1$ process. \note{I would rewrite this...}

% \note{The continuum and discrete formulas are being mixed a bit in the discussion}
 
% It remains to understand the
 % origin of such formulas, or if the Airy$_1$ formula may arise from some sort of {\it
 %   generalized} line ensemble \note{...}.  Since we have an earlier continuum formula for
 % the Airy$_2$ process, we describe how the proof can be modified to obtain a direct proof
 % of the regularity there, using the Kolmogorov condition for the two dimensional
 % distributions.  Note that for the Airy$_2$ process, there are recent alternative
 % approaches using line ensembles \cite{corwinHammond}.

 In this article we will obtain analogous discrete and continuum formulas for the Airy$_1$
 process, and use them to prove directly that it is H\"older $\frac12-\delta$ continuous
 for any $\delta>0$. This regularity of Airy$_1$ is expected from the fact that the
 process is believed to look locally like a Brownian motion. In fact, we will show in this
 direction, using the alternative determinantal formula, that the finite dimensional distributions
 of the Airy$_1$ process converge under diffusive scaling to those of a Brownian motion.
 
 Note that the existence of formulas for the Airy$_1$ process involving boundary value
 operators is to some extent surprising. In the case of the Airy$_2$ process, which is the
 limit of the rescaled top line in a system of non-intersecting Brownian motions (Dyson's
 Brownian motion for the Gaussian Unitary Ensemble), the formula can be seen as a certain
 extension of the Karlin-McGregor formula (see \cite{bcr}). On the other hand, there is no
 known analogous construction of the Airy$_1$ process (see in particular
 \cite{borFerrPrah}), for which the associated determinantal process is signed (see
 \cite{bfp}), and thus it is not at all apparent where formulas like \eqref{eq:airy1fd} or
 \eqref{eq:basic1} below are coming from.

 % they have a direct interpretation in terms of the top line of non-intersecting paths and the
 % Karlin-McGregor formula. In the Airy$_1$ case, such an interpretation, if it exists,
 % would have to be more indirect. \note{Mention the signed determinantal process stuff...}

\subsection{Statement of the results}

Now we turn to a precise description of the Airy$_1$ process, which will be our main object
of study. It was first derived by \citet{sasamoto} (see also \cite{borFerPrahSasam,bfp})
by asymptotic analysis of exact formulas for TASEP with periodic initial data. It is a
stationary process defined through its finite-dimensional distributions, given by a
determinantal formula: for $x_1,\dots,x_n\in\mathbb{R}$ and $t_1<\dots<t_n$ in
$\mathbb{R}$,
\begin{equation}\label{eq:detAiry1}
\pp\!\left(\aipo(t_1)\le x_1,\dots,\aipo(t_n)\le x_n\right) = 
\det(I-\mathrm{f}^{1/2}K^{\mathrm{ext}}_1\mathrm{f}^{1/2})_{L^2(\{t_1,\dots,t_n\}\times\mathbb{R})},
\end{equation}
where we have counting measure on $\{t_1,\dots,t_n\}$ and
Lebesgue measure on $\mathbb{R}$,  $\mathrm f$ is defined on
$\{t_1,\dots,t_n\}\times\mathbb{R}$ by $\mathrm{f}(t_j,x)=\uno{x\in(x_j,\infty)}$ and
\begin{multline}\label{eq:fExtAiry1}
K^{\rm ext}_1(t,x;t',x')=-\frac{1}{\sqrt{4\pi
    (t'-t)}}\exp\!\left(-\frac{(x'-x)^2}{4 (t'-t)}\right)\uno{t'>t}\\
+\Ai(x+x'+(t'-t)^2) \exp\!\left((t'-t)(x+x')+\frac23(t'-t)^3\right).
\end{multline}
Here, and in everything that follows, the determinant means the Fredholm
determinant in the Hilbert space indicated in the subscript. In particular from
\eqref{eq:fExtAiry1} and \cite{ferrariSpohn} one obtains that the one-point distribution
of the Airy$_1$ process is given in terms of the Tracy-Widom largest eigenvalue distribution for the Gaussian
Orthogonal Ensemble (GOE) \cite{tracyWidom2}:
\[\pp(\aipo(0)\leq m)=F_{\rm GOE}(2m).\]
Note that it follows from \eqref{eq:detAiry1} that $\aipo(t)$ has the same distribution as $\aipo(-t)$.

The definition of the Airy$_1$ process is analogous to that of the Airy$_2$ process,
introduced by \citet{prahoferSpohn}, whose $n$ dimensional distributions are given by
\begin{equation}\label{eq:fExtAiry2}
  \mathbb{P}\!\left(\aip(t_1)\le x_1,\dots,\aip(t_n)\le x_n\right) = 
  \det(I-\mathrm{f}^{1/2}K_2^{\mathrm{ext}}\mathrm{f}^{1/2})_{L^2(\{t_1,\dots,t_n\}\times\mathbb{R})},
\end{equation}
where the {\it extended Airy kernel} \cite{prahoferSpohn,FNH,macedo} $K_2^{\mathrm{ext}}$ is
defined by
\[K_2^\mathrm{ext}(t,x;t',x')=
\begin{cases}
\int_0^\infty d\lambda\,e^{-\lambda(t-t')}\Ai(x+\lambda)\Ai(x'+\lambda), &\text{if $t\ge t'$}\\
-\int_{-\infty}^0 d\lambda\,e^{-\lambda(t-t')}\Ai(x+\lambda)\Ai(x'+\lambda),  &\text{if $t<t'$},
\end{cases}.\]

The analogy between the definitions becomes clearer in light of the following
observations. Letting $K_{\Ai}$ denote the \emph{Airy kernel}
\[K_{\Ai}(x,y)=\int_{-\infty}^0 d\lambda\Ai(x-\lambda)\Ai(y-\lambda)\] and $H$ denote the
\emph{Airy Hamiltonian}
\[H=-\Delta+x,\] where $\Delta=\p_x^2$ denotes the one-dimensional Laplacian, one
can show (formally) that the extended Airy kernel can be rewritten as
\begin{equation}
K^{\rm ext}_2(t,x;t',x')
=-e^{-(t'-t)H}(x,x')\uno{t'>t}+e^{tH}\K e^{-t'H}(x,x').\label{eq:extAiry2}
\end{equation}
On the other hand, as shown in Appendix A of \cite{borFerPrahSasam}, $K^{\rm ext}_1$ can be expressed
(formally) in the following alternative way:
\begin{equation}\label{eq:extAiry1}
K^{\rm ext}_1(t,x;t',x')=-e^{(t'-t)\Delta}(x,x')\uno{t'>t}+e^{-t\Delta}B_0e^{t'\Delta}(x,x'),
\end{equation}
where
\[B_0(x,y)=\Ai(x+y).\] Note that \eqref{eq:extAiry1} corresponds exactly to
\eqref{eq:extAiry2} after replacing $H$ by $-\Delta$ and $\K$ by $B_0$. This particular
replacement was emphasized in \cite{ferrariReview}; more generally, all the extended
kernels arising in this and related areas have an analogous structure. We stress that
both \eqref{eq:extAiry2} and \eqref{eq:extAiry1} should be regarded at this point as
formal identities, as it is not clear how to make sense of $e^{-tH}$ and $e^{t\Delta}$ for
$t<0$.

Our first result provides a new determinantal formula for the finite-dimensional
distributions of the Airy$_1$ process without using extended kernels or, in other words,
involving the Fredholm determinant of an operator acting on $L^2(\rr)$ instead of
$L^2(\{t_1,\dotsc,t_n\}\times\rr)$. For the Airy$_2$ process such a formula was introduced by
\citet{prahoferSpohn} as its original definition:
\begin{multline}
  \label{eq:airy2fd}
    \pp\!\left(\aip(t_1)\leq x_1,\dotsc,\aip(t_n)\leq x_n\right)\\
  =\det\!\left(I-K_{\Ai}+\bar P_{x_1}e^{(t_1-t_2)H}\bar P_{x_2}e^{(t_2-t_3)H}\dotsm
   \bar P_{x_n}e^{(t_n-t_1)H}K_{\Ai}\right)_{L^2(\rr)},
\end{multline}
where $\bar P_a$ denotes projection onto the interval $(-\infty,a]$. The equivalence of
\eqref{eq:fExtAiry2} and \eqref{eq:airy2fd} was derived in
\cite{prahoferSpohn,prolhacSpohn}, see Remarks \ref{rem:airy2case1} and
\ref{rem:airy2case2} below for a discussion about some technical details. Our result
states that the finite-dimensional distributions of the Airy$_1$ process admit the same
representation after replacing $H$ by $-\Delta$ and $\K$ by $B_0$.

\begin{thm}\label{thm:airy1}
  The finite-dimensional distributions of the Airy$\hspace{0.1em}_1$ process are given by the following formula: for
  $x_1,\dots,x_n\in\mathbb{R}$ and $t_1<\dots<t_n$ in $\mathbb{R}$,
  \begin{multline}
    \label{eq:airy1fd}
    \pp\!\left(\aipo(t_1)\leq x_1,\dotsc,\aipo(t_n)\leq x_n\right)\\
    =\det\!\left(I-B_0+\bar P_{x_1}e^{-(t_1-t_2)\Delta}\bar P_{x_2}e^{-(t_2-t_1)\Delta}\dotsm
      \bar P_{x_n}e^{-(t_n-t_1)\Delta}B_0\right)_{L^2(\rr)}.
  \end{multline}
\end{thm}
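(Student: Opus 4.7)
My proof approach mirrors the derivation of the Prähofer--Spohn formula \eqref{eq:airy2fd} from the extended-kernel formula \eqref{eq:fExtAiry2} for the Airy$_2$ process carried out in \cite{prolhacSpohn}, now applied to the Airy$_1$ setting using the alternative representation \eqref{eq:extAiry1} of the extended kernel. Concretely, I would view $\mathrm{f}^{1/2}K^{\mathrm{ext}}_1\mathrm{f}^{1/2}$ as an $n\times n$ block operator on $n$ copies of $L^2(\rr)$, whose $(i,j)$-block is $g_iK^{\mathrm{ext}}_1(t_i,\cdot;t_j,\cdot)g_j$ with $g_k:=\uno{(x_k,\infty)}$ acting by multiplication. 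By \eqref{eq:extAiry1} this block splits as a strictly upper-triangular nilpotent piece $-g_ie^{(t_j-t_i)\Delta}g_j\uno{i<j}$ plus a ``rank'' piece $g_ie^{-t_i\Delta}B_0\,e^{t_j\Delta}g_j$ factoring through a single copy of $L^2(\rr)$.

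The next, and algebraically central, step is to collapse this block determinant down to a single Fredholm determinant on $L^2(\rr)$. The relevant inputs are the cocycle identity $e^{(t_j-t_i)\Delta}e^{(t_k-t_j)\Delta}=e^{(t_k-t_i)\Delta}$ and the identity $g_j=I-\bar P_{x_j}$. Expanding the extended Fredholm determinant as a sum over cycles in the time labels and telescoping the upper-triangular factors against the rank piece---effectively a formal conjugation of the block matrix by $\mathrm{diag}(e^{t_i\Delta})$---one obtains, after cancellation, exactly $\det(I-B_0+\bar P_{x_1}e^{-(t_1-t_2)\Delta}\bar P_{x_2}\dotsm\bar P_{x_n}e^{-(t_n-t_1)\Delta}B_0)_{L^2(\rr)}$. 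The cyclic structure in the final product reflects the fact that the conjugating operators $e^{t_i\Delta}$ and $e^{-t_i\Delta}$ pair up between consecutive factors except at one point in the cycle, leaving a single surviving factor $e^{-(t_n-t_1)\Delta}$ attached to $B_0$.

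The principal obstacle is functional-analytic rather than algebraic: $e^{s\Delta}$ is unbounded on $L^2(\rr)$ when $s<0$, so several of the intermediate operators in the manipulation---most notably $e^{t_j\Delta}$ for $t_j>0$ and the cyclic factor $e^{-(t_n-t_1)\Delta}B_0$ that appears in the final formula---do not literally make sense as bounded operators. I would resolve this by working at the level of integral kernels, using the Airy contour representation $B_0(x,y)=\frac{1}{2\pi}\int e^{\I k(x+y)+\I k^3/3}dk$ with the contour deformed into a sector where the factor $e^{sk^2}$ produced by the backward heat semigroup decays, which renders $e^{-(t_n-t_1)\Delta}B_0$ a well-defined integral operator with super-exponentially decaying kernel in $x+y$; and by exploiting the decay supplied by the projections $\bar P_{x_j}$ to obtain the trace-class bounds needed at each step of the Prolhac--Spohn style rearrangement. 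Showing that all the formal cancellations used in collapsing the block determinant survive once the unbounded factors are replaced by their contour-regularised integral-kernel versions, and that every Fredholm determinant encountered lies in the trace class, is the substantive technical content of the proof.
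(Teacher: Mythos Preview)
Your overall strategy is essentially the paper's: both start from the block form of $\mathrm{f}^{1/2}K^{\mathrm{ext}}_1\mathrm{f}^{1/2}$, split off the strictly upper-triangular heat-kernel piece from the ``rank-one'' $B_0$ piece, and then collapse the block determinant to a scalar one via the Prolhac--Spohn mechanism. Your contour-integral remark about $e^{-(t_n-t_1)\Delta}B_0$ is also on target and corresponds to Proposition~\ref{prop:mLaplacian}.

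The real gap is in your functional-analytic plan. You write that you will ``exploit the decay supplied by the projections $\bar P_{x_j}$ to obtain the trace-class bounds needed at each step.'' In the Airy$_1$ setting this is simply false: $\bar P_{x_j}$ projects onto $(-\infty,x_j]$, which is unbounded to the \emph{left}, and the heat kernel has no decay whatsoever---$e^{t\Delta}$ is not even Hilbert--Schmidt on $L^2(\rr)$. Consequently none of the intermediate block operators, nor the final scalar operator $B_0-\bar P_{x_1}e^{(t_2-t_1)\Delta}\dotsm\bar P_{x_n}e^{-(t_n-t_1)\Delta}B_0$, is trace class as it stands; see Remark~\ref{rem:t}. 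This is exactly the point at which the Airy$_1$ argument diverges from the Airy$_2$ one (where $P_ae^{-tH}$ \emph{is} Hilbert--Schmidt), and the paper singles it out as the main technical issue. The resolution is not decay coming from projections but explicit conjugation by multiplication weights: during the block manipulation one conjugates by the diagonal operator $\sV$ with entries $V_if(x)=(1+x^2)^{-2i}f(x)$ (borrowed from \cite{bfp}), and for the final scalar determinant one conjugates by $Uf(x)=e^{-2(t_n-t_1)x}f(x)$ (Proposition~\ref{prop:traceclass}). These weights, not the $\bar P_{x_j}$, are what make every Fredholm determinant in the argument well defined, and the algebraic reduction has to be reorganised so that at each stage the conjugated operator is trace class. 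Without this ingredient your proposed proof would not go through.
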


\begin{rem}\label{rem:t}\mbox{}
  \begin{enumerate}[label=\arabic*.]
  \item Note that, since $t_1<\dots<t_n$, all the heat kernels in \eqref{eq:airy1fd} are
    well defined except for the first one. The same situation is present in the formula for
    the Airy$_2$ process, as the factor $e^{(t_n-t_1)H}$ in \eqref{eq:airy2fd} is in
    principle ill-defined. The situation is resolved in that case by observing that
    $e^{(t_n-t_1)H}$ is applied after $\K$ in \eqref{eq:airy2fd}, and $\K$ is a projection
    operator onto the negative eigenspace of $H$. In our case the situation is resolved by
    Proposition \ref{prop:mLaplacian} below.
 \item The operator
    \[J:=-B_0+\bar P_{x_1}e^{-(t_1-t_2)\Delta}\bar P_{x_2}e^{-(t_2-t_3)\Delta}\dotsm \bar
    P_{x_n}e^{-(t_n-t_1)\Delta}B_0\] appearing inside the determinant in
    \eqref{eq:airy1fd} is not trace class, basically because the heat kernel is not even
    Hilbert-Schmidt. However, we will show in Proposition \ref{prop:traceclass} that there
    is a conjugate operator $\wt J=U^{-1} JU$ which is trace class in $L^2(\rr)$, so the
    formula \eqref{eq:airy1fd} should be computed as $\det(I-\wt
    J)_{L^2(\rr)}$. Alternatively, this implies that the Fredholm determinant in
    \eqref{eq:airy1fd} regarded as its Fredholm expansion series is well defined. (The same
    issue arises in \eqref{eq:detAiry1}, as $K^{\rm ext}_1$ is not trace class on
    $L^2(\{t_1,\dots,t_n\}\times\rr)$; this is resolved in Appendix A of \cite{bfp}).
  \item Note that the issue discussed in the last point does not arise in the formula
    \eqref{eq:airy2fd} for the Airy$_2$ process. The fact that the operator appearing in
    that formula is trace class is proved in Proposition \gref{prop:theta}.
  \end{enumerate}
\end{rem}

The following result shows that we are allowed to consider the operator $e^{-t\Delta}$ for
$t>0$ as long as it is applied after $B_0$.

\begin{prop}\label{prop:mLaplacian}
  For fixed $t,y\in\rr$ let
  $\varphi_{t,y}(x)=e^{-2t^3/3-(x+y)t}\Ai(x+y+t^2)$.
  Then for all $s,t>0$ we have
  \begin{equation}
    e^{s\Delta}\varphi_{t,y}(x)=\varphi_{t-s,y}(x).\label{eq:varphisg}
  \end{equation}
  In particular,
  $e^{t\Delta}\varphi_{t,y}=\Ai(x+y)$, and as a consequence the kernel $e^{-t\Delta}B_0$ is
  well defined for every $t>0$ via the formula
  \begin{equation}
    e^{-t\Delta}B_0=e^{-2t^3/3-(x+y)t}\Ai(x+y+t^2)\label{eq:e-tDB0}
  \end{equation}
  and it satisfies the group property in the sense that
  $e^{(s+t)\Delta}B_0=e^{s\Delta}e^{t\Delta}B_0$ for all $s,t\in\rr$.
\end{prop}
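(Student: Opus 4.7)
The cleanest route is to verify that $u(s,x):=\varphi_{t-s,y}(x)$ solves the heat equation $\partial_s u=\partial_x^2 u$ on $s>0$ with initial datum $u(0,\cdot)=\varphi_{t,y}(\cdot)$, and then invoke uniqueness of the Cauchy problem to conclude $u(s,\cdot)=e^{s\Delta}\varphi_{t,y}$. All remaining assertions of the proposition follow by specialization and bookkeeping.

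The verification itself is mechanical and rests only on the Airy ODE $\Ai''(z)=z\Ai(z)$. Writing $\tau=t-s$, $E=e^{-2\tau^3/3-(x+y)\tau}$, $A=\Ai(x+y+\tau^2)$ and $A'=\Ai'(x+y+\tau^2)$, I would compute
\[\partial_s u=-\partial_\tau(EA)=\bigl(2\tau^2+(x+y)\bigr)EA-2\tau EA',\]
and, using $\partial_x E=-\tau E$, $\partial_x A=A'$ together with the Airy ODE,
\[\partial_x^2 u=\tau^2 EA-2\tau EA'+(x+y+\tau^2)EA=\bigl(2\tau^2+(x+y)\bigr)EA-2\tau EA',\]
so that $\partial_s u=\partial_x^2 u$. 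The growth of $\varphi_{t-s,y}(x)$ is at worst $O(e^{|t-s|\cdot|x|})$ as $x\to-\infty$ and super-exponentially decaying as $x\to+\infty$, uniformly for $s$ in bounded intervals, so $u$ falls comfortably within the Tychonoff uniqueness class for the Cauchy problem; this yields \eqref{eq:varphisg}.

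Setting $s=t$ then gives $e^{t\Delta}\varphi_{t,y}(x)=\varphi_{0,y}(x)=\Ai(x+y)=B_0(\cdot,y)(x)$, which identifies $\varphi_{t,y}(\cdot)$ as the unique forward-heat preimage of $B_0(\cdot,y)$ in the relevant class and thereby legitimizes \eqref{eq:e-tDB0} as the definition of $e^{-t\Delta}B_0$ for $t>0$. With this interpretation, $(e^{r\Delta}B_0)(x,y)=\varphi_{-r,y}(x)$ for every $r\in\rr$, and the computation above in fact gives $e^{s\Delta}\varphi_{\tau,y}=\varphi_{\tau-s,y}$ for any real $\tau$ (the sign of $\tau$ plays no role in the identities for $E$ and $A$). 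Applying this with $\tau=-t$ reduces $e^{(s+t)\Delta}B_0=e^{s\Delta}e^{t\Delta}B_0$ to $\varphi_{-(s+t),y}=e^{s\Delta}\varphi_{-t,y}$, which is the semigroup relation just established, giving the group property for all $s,t\in\rr$.

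The one place demanding some care is the uniqueness step: $\varphi_{t-s,y}$ is not in $L^p(\rr)$ for any $p$, so the standard Hilbert-space framework for the heat semigroup does not apply directly. Instead one must invoke a Tychonoff-type uniqueness theorem allowing exponential-in-$|x|$ growth, and check that $\varphi_{t-s,y}(x)$ and its candidate heat evolution lie in this class uniformly on compact $s$-intervals. This is standard but should be done explicitly; everything else in the argument is routine verification.
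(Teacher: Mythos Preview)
Your proof is correct and takes a genuinely different route from the paper. The paper proceeds by direct computation: it writes $\Ai$ via its contour-integral representation $\Ai(z)=(2\pi\I)^{-1}\int_{\Gamma_c}du\,e^{u^3/3-uz}$ along a vertical line $\Gamma_c$ with $c>0$, expresses $e^{s\Delta}\varphi_{t,y}(x)$ as a double integral in the heat-kernel variable $z$ and the contour variable $u$, evaluates the Gaussian $z$-integral explicitly, and then shifts $u\mapsto u-s$ to recognize the result as $\varphi_{t-s,y}(x)$. Your approach instead checks the heat equation via the Airy ODE and invokes Tychonoff uniqueness. The paper's argument is entirely self-contained and bypasses any appeal to PDE uniqueness theory (so the growth analysis you flagged as the delicate point simply does not arise), at the cost of a short complex-analytic calculation; your argument avoids the contour manipulation and makes transparent \emph{why} the identity holds (it is exactly the Airy ODE), but relies on a black-box uniqueness theorem and the accompanying growth check. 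Both are perfectly adequate here; the paper's choice is perhaps more in keeping with the explicit-kernel spirit of the rest of the article.
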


We remark that versions of the above identities appear in earlier works on the Airy$_1$
process, and in particular in \cite{sasamoto,bfp,borFerPrahSasam}. Proposition
\ref{prop:mLaplacian} allows us to make sense of \eqref{eq:extAiry1}: since the Airy$_1$
process is stationary, by shifting $t_1,\dotsc,t_n$ we may assume that $0<t_1<\dots<t_n$,
and then all the heat kernels with a negative parameter in \eqref{eq:extAiry1} appear
applied after $B_0$. The same type of argument allows to make sense of
\eqref{eq:extAiry2}, \eqref{eq:airy2fd} and \eqref{eq:airy1fd} (though see also the last
paragraph of Remark \ref{rem:airy2case2}).

As we mentioned, formulas \eqref{eq:airy2fd} and \eqref{eq:airy1fd} are better adapted
than the standard extended kernel formulas to short range properties of the process. As a first
application we will prove

\begin{thm}\label{thm:regularity}
  The Airy$\hspace{0.05em}_1$ process $\aipo$ and the Airy$\hspace{0.05em}_2$ process
  $\aip$ have versions with H\"older continuous paths with exponent $\tfrac12-\delta$ for
  any $\delta>0$.
\end{thm}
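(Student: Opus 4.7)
The plan is to apply the classical Kolmogorov continuity criterion to the increments of the processes. Concretely, I would show that for every even integer $p\geq 2$ and every compact time interval $I\subset\rr$, there is a constant $C=C(p,I)$ such that
\[
  \ee\bigl[(\aipo(t)-\aipo(s))^p\bigr]\leq C\,|t-s|^{p/2},\qquad s,t\in I,
\]
and similarly for $\aip$. Kolmogorov's theorem then yields a modification with H\"older continuity of any exponent strictly below $\tfrac12-\tfrac1p$, and letting $p\to\infty$ produces the claimed $\tfrac12-\delta$ regularity.

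The moment estimate will in turn follow from a Gaussian-type tail bound on the increment,
\[
  \pp\bigl(|\aipo(t)-\aipo(s)|>r\bigr)\leq C_1\,e^{-c r^2/|t-s|},
\]
valid in a moderate range of $r$, together with the stretched-exponential decay at large $r$ coming from the known upper-tail asymptotics of $F_{\rm GOE}$ applied to each of $\aipo(t)$ and $\aipo(s)$ separately (via a union bound). Integrating the resulting tail against $p\,r^{p-1}\,dr$ gives the desired $|t-s|^{p/2}$ scaling.

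To extract the Gaussian tail I would use the $n=2$ case of Theorem \ref{thm:airy1}. Writing $F(x_1,x_2;\tau)$ for $\pp(\aipo(0)\leq x_1,\aipo(\tau)\leq x_2)$ and $F_1(x)$ for $\pp(\aipo(0)\leq x)$, stationarity gives
\[
  \pp\bigl(\aipo(0)\leq x,\ \aipo(\tau)>x+r\bigr)=F_1(x)-F(x,x+r;\tau),
\]
which is a difference of Fredholm determinants. The standard perturbation bound
\[
  |\det(I-A)-\det(I-B)|\leq \|A-B\|_1\exp(\|A\|_1+\|B\|_1+1)
\]
for trace-class operators then reduces the problem to controlling the trace norm of the correction
\[
  \bar P_x\bigl[e^{\tau\Delta}\bar P_{x+r}e^{-\tau\Delta}-\bar P_x\bigr]B_0
\]
(after the conjugation producing the trace-class representative $\widetilde J$ of Remark \ref{rem:t}). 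By Proposition \ref{prop:mLaplacian}, the inner occurrence of $e^{-\tau\Delta}B_0$ is the concrete Airy-shift kernel \eqref{eq:e-tDB0}, so the correction is a genuine integral operator, and convolution with the forward Gaussian kernel, together with the decay of $\Ai$ at $+\infty$, should produce the required factor of order $e^{-c r^2/\tau}$.

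The main obstacle is converting the schematic identity above into a rigorous, uniform trace-norm bound. The ``wrong-sign'' heat semigroup $e^{-\tau\Delta}$ only acquires meaning through Proposition \ref{prop:mLaplacian}, so the conjugation $U$ defining $\widetilde J$ must be chosen in a way that is uniform in $s,t\in I$, and one must combine the Gaussian decay of the forward heat kernel with the superexponential decay of $\Ai$ to simultaneously tame the infrared growth of the shifted Airy kernel and extract the $\sqrt{|t-s|}$ factor. The same scheme applies to $\aip$ via \eqref{eq:airy2fd} after replacing $-\Delta$ by $H$ and $B_0$ by $\K$; that case is somewhat simpler because $e^{-\tau H}$ is a bona fide contraction semigroup on the range of $\K$, so no analogue of Proposition \ref{prop:mLaplacian} is needed to make sense of the correction.
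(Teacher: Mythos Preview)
Your overall strategy---Kolmogorov's criterion combined with the determinant perturbation inequality applied to the two-point formula of Theorem~\ref{thm:airy1}---is precisely the paper's approach, and the operator whose trace norm must be controlled is the right one. However, you have misidentified the main obstacle. The difficulty is not uniformity in $s,t\in I$, nor the wrong-sign semigroup (Proposition~\ref{prop:mLaplacian} disposes of that once and for all). The real problem is the \emph{exponential} factor $e^{\|A\|_1+\|B\|_1+1}$ in the perturbation bound: after conjugation, the trace norm of $U(B_0-\bar P_xB_0)U^{-1}$ grows like $|x|^{3/2}$ as $x\to-\infty$ (for the optimal choice of $U$), so your estimate on $F_1(x)-F(x,x+r;\tau)$ carries a prefactor $e^{c|x|^{3/2}}$. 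This destroys any hope of a uniform Gaussian tail for the increment; the one-point tail you invoke handles large $r$, but not the regime where $r$ is moderate and the \emph{level} $x$ is very negative.

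The paper resolves this by proving and using a variant of the Kolmogorov criterion (Lemma~\ref{lem:kolmog}) for the \emph{truncated} process $\aipo^M$, so that all levels lie in $[-M,\infty)$ and the dangerous trace norm is at most $cM^{3/2}$. The key trick is then to let $M$ depend on $t$: taking $M\sim\big(\log t^{-(1+n)}\big)^{1/3}$ turns $e^{cM^{3/2}}$ into a harmless negative power of $t$, which the Gaussian factor $\Phi(t^{-1/2}|b-a|)$ absorbs. A separate lemma (Lemma~\ref{lem:airy1bdd}) verifies that $\aipo$ is a.s.\ bounded on a dense set, so the truncation is harmless. Your proposal does not contain this truncation mechanism, and without it the argument does not close. (For $\aip$ the same issue arises and is handled the same way; the simplification there is only that no conjugation is needed to make the operators trace class.)
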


Recall that continuity was known for $\aip$ but not for $\aipo$. The H\"older $\frac12-$
continuity for $\aip$ also follows from recent work of \citet{corwinHammond}. They study
the Airy line ensemble directly, obtaining the continuity (and H\"older $\frac12-$
continuity) directly from a certain Brownian Gibbs property. In general, all the Airy
processes are supposed to be locally Brownian. Note that the definition of
locally Brownian is not unique. For $\aip$ it follows from \cite{corwinHammond} that it is
locally absolutely continuous with respect to Brownian motion. Analogous results have
recently become available for the solutions of the KPZ equation at finite times
\cite{Hairer,qrLocalBrownianKPZ,corwinHammondKPZ}. For $\aipo$ the line ensemble picture
is missing at the present time, so a proof was lacking.  As another application of the
formulas, we prove that the Airy$_1$ process is locally Brownian in the sense that under
local Brownian scaling, the incremental process converges to that of Brownian motion.

\begin{thm}\label{thm:localBM}  For any fixed $s\in\mathbb{R}$, 
  let $B_\ep (\cdot)$ be defined by $B_\ep(t)= \ep^{-1/2}(\aipo(s+ \ep t)-\aipo(s) )$,
  $t>0$.  Then $B_\ep (\cdot)$ converges to Brownian motion in the sense of convergence of
  finite dimensional distributions.  The same holds for $\tilde{B}_\ep (\cdot)$ defined by
  $\tilde{B}_\ep(t)= B_\ep(-t)$, $t>0$.
\end{thm}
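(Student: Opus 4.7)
The plan is to exploit the formula \eqref{eq:airy1fd}: the heat semigroup $e^{t\Delta}$ appearing there has kernel $(4\pi t)^{-1/2}e^{-(x-y)^2/(4t)}$, which is exactly the transition density of a Brownian motion with variance $2t$. So the target Brownian motion is essentially built into the formula, and the task is to extract it under the correct rescaling. By stationarity we may take $s=0$, and by the symmetry $\aipo(t)\stackrel{d}{=}\aipo(-t)$ noted after \eqref{eq:fExtAiry1} it suffices to prove convergence of the joint CDFs of $B_\ep$ at any $0<t_1<\dotsb<t_n$; the same argument applied to the time-reversed process gives the claim for $\tilde B_\ep$.

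Define $G_\ep(z_0,z_1,\dotsc,z_n):=\pp(\aipo(0)\leq z_0,\,\aipo(\ep t_i)\leq z_i\text{ for all }i)$. Then
\[\pp(B_\ep(t_i)\leq x_i\text{ for all }i) = \int_\rr \partial_{z_0}G_\ep(y,\,y+\sqrt\ep\,x_1,\dotsc,y+\sqrt\ep\,x_n)\,dy.\]
Applying Theorem \ref{thm:airy1} with the $n+1$ time points $0<\ep t_1<\dotsb<\ep t_n$ realizes $G_\ep$ as a Fredholm determinant on $L^2(\rr)$. I then conjugate the operator inside the determinant by the composition of a translation $(T_y f)(x)=f(x-y)$ and a diffusive rescaling $(U_\ep f)(x)=\ep^{1/4}f(\sqrt\ep\,x)$. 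The net effect is that each projection $\bar P_{y+\sqrt\ep\,x_i}$ becomes $\bar P_{x_i}$, each interior heat kernel $e^{\ep s\Delta}$ becomes $e^{s\Delta}$ (with the final backward heat kernel $e^{-t_n\Delta}$ well-defined on the rescaled Airy kernel by Proposition \ref{prop:mLaplacian}), and $B_0(x,z)=\Ai(x+z)$ becomes $\tilde B_{\ep,y}(x,z)=\sqrt\ep\,\Ai(\sqrt\ep(x+z)+2y)$. All the $\ep$- and $y$-dependence is concentrated in $\tilde B_{\ep,y}$, while the $\ep$- and $y$-independent product
\[\Pi \;:=\; \bar P_0\,e^{t_1\Delta}\,\bar P_{x_1}\,e^{(t_2-t_1)\Delta}\,\dotsm\,\bar P_{x_n}\,e^{-t_n\Delta}\]
already contains the Brownian transition semigroup at the times $t_i$. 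The central claim, to be established via the Fredholm expansion, is that as $\ep\to 0$,
\[\partial_{z_0}G_\ep(y,\,y+\sqrt\ep\,x_1,\dotsc,y+\sqrt\ep\,x_n) \longrightarrow \rho_0(y)\cdot Q(x_1,\dotsc,x_n),\]
where $\rho_0(y)=\tfrac{d}{dy}F_{\rm GOE}(2y)$ is the density of $\aipo(0)$ and
\[Q(x_1,\dotsc,x_n) \;=\; \int_{y_i\leq x_i}\prod_{i=1}^n \frac{e^{-(y_i-y_{i-1})^2/(4(t_i-t_{i-1}))}}{\sqrt{4\pi(t_i-t_{i-1})}}\,dy_1\dotsm dy_n\]
(with $y_0=0$, $t_0=0$) is the CDF at times $t_1,\dotsc,t_n$ of a Brownian motion with variance $2t$ per unit time, directly read off the heat kernels inside $\Pi$. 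Since $\int\rho_0\,dy=1$, integrating gives $\pp(B_\ep(t_i)\leq x_i)\to Q(x_1,\dotsc,x_n)$, which is the FDD of $\sqrt{2}\,W$ for $W$ a standard Brownian motion.

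The main technical obstacle is the asymptotic analysis: although a change of variable in the Airy integral shows that $\tilde B_{\ep,y}$ has constant trace $\tfrac12$ uniformly in $\ep$ and $y$, this operator fails to converge in trace norm or even Hilbert--Schmidt norm as $\ep\to 0$ (its amplitude is $O(\sqrt\ep\,|\Ai(2y)|)$ but its spatial extent is $O(1/\sqrt\ep)$), so the Fredholm determinant must be analyzed term by term, using the trace-class formulation described in Remark \ref{rem:t} to get uniform-in-$\ep$ bounds on each term. Passing to the limit under the $y$-integration then requires dominated convergence, justified by the Tracy--Widom tails of $F_{\rm GOE}$ that control $\rho_0$ and the analogous decay of $\partial_{z_0}G_\ep$. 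Identifying the factored limit $\rho_0(y)\cdot Q(x_1,\dotsc,x_n)$---equivalently, showing that conditionally on $\aipo(0)=y$ the diffusively scaled increments of $\aipo$ behave like a Brownian motion uniformly in $y$---is the crux of the argument and the point where the decoupling between the marginal of $\aipo(0)$ and the local Brownian fluctuations of its increments must be read off the algebraic structure of $I-(I-\Pi)\tilde B_{\ep,y}$.
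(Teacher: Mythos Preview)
Your setup is correct and matches the paper's: reduce to $s=0$ by stationarity, condition on $\aipo(0)$, and exploit that the heat kernels in \eqref{eq:airy1fd} are Brownian transition densities. The divergence is in how the rescaling is implemented, and this is where your proposal has a genuine gap.

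By conjugating the whole operator by the diffusive rescaling $U_\ep$, you concentrate all $\ep$- and $y$-dependence in $\tilde B_{\ep,y}(x,z)=\sqrt\ep\,\Ai(\sqrt\ep(x+z)+2y)$. As you yourself observe, this kernel does not converge in any operator norm: it keeps constant trace while spreading over scale $1/\sqrt\ep$. Your proposed remedy, a term-by-term Fredholm analysis with the trace-class conjugation of Remark \ref{rem:t} / Proposition \ref{prop:traceclass}, does not work as stated: that conjugation is multiplication by $e^{-2(r-\ell)x}$ in the original variables, and after your rescaling it becomes $e^{-2\ep t_n x}$, so the decay that produces the trace-class bounds vanishes as $\ep\to0$. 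You also never actually carry out the $\partial_{z_0}$ differentiation through the conjugation (it produces a factor $\ep^{-1/2}\delta_0$ in the rescaled picture that must be matched against the $\sqrt\ep$ in $\tilde B_{\ep,y}$). You then correctly identify the factorization $\rho_0(y)\cdot Q(x_1,\dotsc,x_n)$ as ``the crux,'' but offer no mechanism to obtain it.

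The paper resolves exactly this obstacle by differentiating \emph{before} rescaling, via the identity $\partial_h\det(I+A(h))=\det(I+A(h))\tr\!\big[(I+A(h))^{-1}\partial_hA(h)\big]$ (Lemma \ref{lem:cond}). This factors the conditional probability as (determinant) $\times$ (trace). The determinant is just the joint probability $\pp(\aipo(0)\le x,\aipo(\ep t_i)\le x+\sqrt\ep y_i)$, which converges to $F_{\rm GOE}(2x)$ by the continuity of $\aipo$. The trace is a double integral $\int dz\int du$; the $z$-integral is literally $\int e^{\ep t_1\Delta}\bar P_{\sqrt\ep y_1+x}\cdots\bar P_{\sqrt\ep y_n+x}(x,z)[\cdots]\,dz$, and a translation plus the \emph{exact} scaling identity for heat kernels turns it into the Brownian expectation $\ee[\uno{B(t_i)\le y_i}\,g^\ep(x,B(t_n))]$. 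The remaining factor $g^\ep$ involves only the \emph{unrescaled} resolvent $(I-L_\ep)^{-1}$, and $L_\ep\to B_0-\bar P_xB_0$ in trace norm because at the original scale the extra projections simply collapse as $\sqrt\ep y_i\to0$. Thus the decoupling you seek is not read off a limit of degenerating kernels but is a structural consequence of the $\det\cdot\tr$ split: the Brownian part is isolated as an exact identity, and everything else stays at the scale where the trace-class machinery of Proposition \ref{prop:traceclass} applies.
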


Note that by stationarity there is no loss of generality in taking $s=0$ in the theorem,
while the statement about $\tilde{B}_\ep (\cdot)$ follows from the statement about $B_\ep
(\cdot)$ by time reversal invariance of Airy$_1$. The analogue of Theorem
\ref{thm:localBM} for Airy$_2$, which follows from its local absolute continuity with
respect to Brownian motion, was proved earlier by \citet{hagg}, and can also be obtained
directly by our method. We remark also that, using an analogue of \eqref{eq:airy1fd} for
the Airy$_{2\to1}$ process, which will appear in upcoming work \cite{bcr}, it should not
be hard to adapt our proofs to show that $\mathcal{A}_{2\to1}$ is H\"older $\frac12-$
continuous and is locally Brownian in the sense of the last result (in fact, the result
of \cite{bcr} is more general and should allow one to extend our proofs to other processes).

Going back to $\aipo$, one can be quite precise in terms of finite dimensional
distributions. Letting $0<t_1<\cdots < t_n$,  we will prove that
% \begin{align}\label{i}
%  &\pp\!\left(\aipo(\ep t_1)\leq x+\sqrt{\ep}y_1 , \ldots, \aipo(\ep t_n)\leq
%    x+\sqrt{\ep}y_n\,\middle|\,\aipo(0)=x \right)\\
%  & =\ee[ \uno{B(t_i)\le y_i, i=1,\ldots,n}  ~~g^\ep_{t_1,y_1,\ldots,t_n,y_n} (x, B(t_n))] h^\ep_{t_1,y_1,\ldots,t_n,y_n} (x)
%, 
%  \end{align}
 \begin{multline}\label{i}
   \pp\!\left(\aipo(\ep t_1)\leq x+\sqrt{\ep}y_1 , \ldots, \aipo(\ep t_n)\leq
    x+\sqrt{\ep}y_n\,\middle|\,\aipo(0)=x \right)\\
   =\ee\!\left( \uno{B(t_i)\le y_i, i=1,\ldots,n}\,g^\ep_{\mathbf{t},\mathbf{y}} (x, B(t_n))\right) h^\ep_{\mathbf{t},\mathbf{y}} (x), 
  \end{multline}
where $B(t)$ is a standard Brownian motion with $B(0)=0$ and
\begin{equation}\label{II}
  g^\ep_{\mathbf{t},\mathbf{y}} (x, z)= 
  \frac{ \int_{-\infty}^\infty du\, e^{-\ep t_n\Delta }
    B_0(\sqrt{\ep}z+x,u) \left(I-B_0 + \Lambda_{(0,\ep\mathbf{t})
      }^{(x,\sqrt{\ep}\mathbf{y}+x)}
      e^{- \ep t_n\Delta} B_0\right)^{-1}\!(u,x)}{
    \int_{-\infty}^\infty du\, B_0(x,u)\left(I-B_0+\bar{P}_x B_0\right)^{-1}\!(u,x)},
\end{equation}
where
$
\Lambda_{ (0,\ep\mathbf{t})
      }^{(x,\sqrt{\ep}\mathbf{y}+x)}= \bar P_{x}e^{t_1\Delta}\bar P_{y_1+x}e^{(t_2-t_1)\Delta}\dotsm
      e^{(t_n-t_{n-1})\Delta}\bar P_{y_n+x}
$
and
\begin{equation}\label{I}
  h^\ep_{\mathbf{t},\mathbf{y}} (x, z)=   \frac{\pp\!\left(\aipo(0)\le x , \aipo( \ep t_1)\leq x+\sqrt{\ep}y_1
      , \ldots, \aipo( \ep t_n)\leq x+\sqrt{\ep}y_n\right) }{F_{\rm GOE}(2x)}.
\end{equation}
One has
\begin{equation}
  \lim_{\ep\to0} g^\ep_{\mathbf{t},\mathbf{y}} (x, z) =  \lim_{\ep\to0} h^\ep_{\mathbf{t},\mathbf{y}} (x) = 1,
\end{equation}
from which it follows from \eqref{i} that the finite dimensional distributions converge to those of Brownian 
motion.  
%The H\"older continuity then implies the tightness in $C[0,T]$ to obtain the stronger convergence in the theorem.
It would be interesting to understand the role of $g^\ep_{\mathbf{t},\mathbf{y}}(x, z)$.
Expansions $g^\ep_{\mathbf{t},\mathbf{y}} (x, z)= 1 + \ep^{1/2}
g^{(1)}_{\mathbf{t},\mathbf{y}}(x, z)+ \mathcal{O}(\ep)$ and
$h^\ep_{\mathbf{t},\mathbf{y}} (x)= 1 + \ep^{1/2} h^{(1)}_{\mathbf{t},\mathbf{y}} (x)+
\mathcal{O}(\ep)$ may identify the infinitesimal increments of $\aipo$ in order to develop
a stochastic calculus.

One of course has formulas analogous to \eqref{i} for the Airy$_2$ process (and, in view
of \cite{bcr}, other processes such as Airy$_{2\to1}$), but we do not include them here.

%
%\begin{multline}
%  \lim_{\ep\to0}  \pp\!\left(\aipo(s+ \ep t_1)-\aipo(s)\leq \sqrt{\ep} y_1 , \ldots,
%    \aipo(s+ \ep t_n)-\aipo(s)\leq \sqrt{\ep} y_n~\mid~\aipo(s) \right) \\
%  =  \pp\!\left( B(t_1) \le y_1,\ldots, B(t_n) \le y_n\right),
%\end{multline}
%where $B(t)$ is a standard Brownian motion with $B(0)=0$.  Furthermore, $\tilde{\aipo}^\ep(t)=  \sqrt{\ep}(\aipo(s\pm \ep t)-\aipo(s) )$ converges to Brownian motion (in the sense of weak convergence of probability measures on the space of continuous functions equipped with the topology of uniform convergence on compact sets).
%\end{thm}
%
%Actually, one even gets a formal expression for the next order correction, which is of size $\sqrt{\ep}$.   From (\ref{II}) we have
%\begin{multline}
% \ep^{-1/2}[ \pp\!\left(\aipo(s+ \ep t_i)-\aipo(s)\leq \sqrt{\ep} y_i , i=1,\ldots,n~\mid~\aipo(s)=x \right) - \pp\!\left( B(t_i) \le y_i, i=1,\ldots,n\right)] \\= \frac{F'_{\rm GOE}(x)}{F_{\rm GOE}(x)}
%  g_{t_1,\ldots,t_n}(y_1,\ldots,y_n)   
%  + h(x) \int zdz \pp\!\left( B(t_1) \le y_1+ \tfrac{t_1}{t_n} z,\ldots, B(t_n) \le y_n+z\right) + \mathcal{O}(\ep^{1/2}) .\end{multline}
%where
%\begin{equation}
% g_{t_1,\ldots,t_n}(y_1,\ldots,y_n) :=  \int_0^\infty (1- P_0( B(t_i) \le y_i +z , i=1,\ldots,n) ) dz\pp\!\left( B(t_i) \le y_i, i=1,\ldots,n\right)
% \end{equation}
% and 
%\begin{equation}
%h(x) := 
%\frac{ \int_{-\infty}^\infty du\,
%    B'_0(x,u) \left(I-B_0+\bar{P}_x B_0\right)^{-1}(u,x)}{
%    \int_{-\infty}^\infty du\, B_0(x,u)\left(I-B_0+\bar{P}_x B_0\right)^{-1}(u,x)}.
%    \end{equation}

Our last result, which is an application of Theorems \ref{thm:airy1} and \ref{thm:regularity},
gives a determinantal formula for the continuum statistics of the Airy$_1$ process on a
finite interval. This was done for the Airy$_2$ process in \cite{cqr}, and the same
argument will allow us to take a limit of the formula in Theorem \ref{thm:airy1} as the
size of the mesh in $t$ goes to 0.

Fix $\ell<r$. Given $g\in H^1([\ell,r])$ (i.e. both $g$ and
its derivative are in $L^2([\ell,r])$), define an operator $\Lambda^g_{[\ell,r]}$ acting
on $L^2(\rr)$ as follows: $\Lambda^g_{[\ell,r]}f(\cdot)=u(r,\cdot)$, where $u(r,\cdot)$ is
the solution at time $r$ of the boundary value problem
 \begin{equation}
\begin{aligned}
  \p_tu-\Delta u&=0\quad\text{for }x<g(t), \,\,t\in (\ell,r)\\
  u(\ell,x)&=f(x)\uno{x<g(\ell)}\\
  u(t,x)&=0\quad\text{for }x\ge g(t).
\end{aligned}\label{eq:bdval}
\end{equation}
The fact that this problem makes sense for $g\in H^1([\ell,r])$ is not hard and can be
seen from the proof of Proposition \ref{prop:traceclass} below (see also Proposition \gref{prop:theta}).

\begin{thm}\label{thm:aiLo}
  \begin{equation}\label{eq:basic1}
    \pp\!\left(\aipo(t)\leq g(t)\text{ for
      }t\in[\ell,r]\right)=\det\!\left(I-B_0+\Lambda^g_{[\ell,r]}e^{-(r-\ell)\Delta}B_0\right)_{L^2(\rr)}.
  \end{equation}
\end{thm}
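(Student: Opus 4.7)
The strategy mirrors the Airy$_2$ argument in \cite{cqr}: I would obtain \eqref{eq:basic1} as a mesh limit of the formula from Theorem \ref{thm:airy1}. Fix partitions $\ell = t_0^{(n)} < t_1^{(n)} < \cdots < t_n^{(n)} = r$ with mesh size tending to $0$. Theorem \ref{thm:regularity} furnishes a H\"older $(\tfrac12-\delta)$-continuous version of $\aipo$ on $[\ell,r]$, and by the Sobolev embedding $H^1([\ell,r]) \hookrightarrow C^{1/2}([\ell,r])$ the barrier $g$ is itself $\tfrac12$-H\"older; a standard modulus-of-continuity argument then yields
$$
\pp\!\left(\aipo(t_i^{(n)}) \le g(t_i^{(n)}) \text{ for all } i\right) \xrightarrow[n\to\infty]{} \pp\!\left(\aipo(t) \le g(t) \text{ for all } t \in [\ell, r]\right).
$$
Applying Theorem \ref{thm:airy1} to the left-hand side, and noting that the only negative-time heat factor $e^{-(r-\ell)\Delta}$ is applied to $B_0$ and so is legitimate by Proposition \ref{prop:mLaplacian}, the $n$-th discrete probability becomes
$$
\det\!\left(I - B_0 + \Lambda_n\, e^{-(r-\ell)\Delta} B_0\right)_{L^2(\rr)}, \qquad \Lambda_n := \bar P_{g(t_0^{(n)})}\prod_{i=1}^{n} e^{(t_i^{(n)}-t_{i-1}^{(n)})\Delta}\, \bar P_{g(t_i^{(n)})}.
$$

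Next, I would identify $\Lambda_n$ as a Trotter-type discretization of the solution operator $\Lambda^g_{[\ell,r]}$ of \eqref{eq:bdval}: on each subinterval $(t_{i-1}^{(n)}, t_i^{(n)})$ one runs the free heat flow and then removes the mass that has crossed the barrier at time $t_i^{(n)}$. For a fixed barrier this is the classical Chernoff--Trotter formula for the Dirichlet semigroup; for a moving barrier with $g \in H^1$, a parabolic energy estimate exploiting the H\"older regularity of $g$ and controlling the $L^2$-modulus of continuity of $s\mapsto \uno{x<g(s)}$ should yield $\Lambda_n f \to \Lambda^g_{[\ell,r]} f$ strongly in $L^2(\rr)$ for each $f\in L^2(\rr)$.

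The main obstacle, and where I expect most of the technical work, is upgrading this strong operator convergence to convergence of the Fredholm determinants, since neither $\Lambda_n\, e^{-(r-\ell)\Delta}B_0$ nor its limit is trace class on $L^2(\rr)$ (Remark \ref{rem:t}.2). Following the conjugation scheme promised in Proposition \ref{prop:traceclass}, I would replace these operators by their conjugates $U^{-1}(\cdot)U$ under a suitable multiplication operator $U$ that turns them into honest trace-class operators on $L^2(\rr)$. The explicit kernel formula \eqref{eq:e-tDB0} together with the Gaussian form of the free heat semigroups allows one to obtain kernel convergence on compact sets, while the uniformity in $n$ of the trace-norm estimate comes from $\Lambda_n$ being a product of orthogonal projections and contractions. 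Trace-norm convergence of the conjugated operators then gives convergence of the Fredholm determinants, which combined with the first display above yields \eqref{eq:basic1}.
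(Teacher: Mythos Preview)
Your high-level strategy matches the paper's: take the mesh limit of Theorem \ref{thm:airy1}, use the continuity from Theorem \ref{thm:regularity} for the left side, and prove trace-norm convergence of the conjugated operators for the right side via Proposition \ref{prop:traceclass}. The paper in fact restricts to dyadic meshes $n_k=2^k$, which is all one needs once $\aipo$ is known to be continuous.

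Where you diverge is in the mechanism for the trace-norm convergence. You propose a Trotter--Chernoff/parabolic-energy argument to get strong convergence $\Lambda_n f\to\Lambda^g_{[\ell,r]}f$, and then hope to promote this to trace-norm convergence of the conjugated composites via pointwise kernel convergence plus a uniform trace-norm bound. Be careful here: strong operator convergence together with a uniform trace-norm bound does \emph{not} in general imply trace-norm convergence, and ``kernel convergence on compacts plus uniform bound'' is not by itself a trace-class criterion. The paper avoids this by a probabilistic route: using the Feynman--Kac representation \eqref{eq:Lambda}, one writes the conjugated operator as a product $V_nW$ (respectively $VW$), where $W$ comes from $e^{-(r-\ell)\Delta}B_0$ and is a fixed Hilbert--Schmidt operator independent of $n$, while $V_n$ and $V$ have kernels given by Brownian-bridge hitting probabilities (of the barrier $g$ by a Gaussian random walk bridge, respectively a Brownian bridge). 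Coupling the walk to the bridge at mesh points gives a monotone pointwise bound $|V_n(x,y)-V(x,y)|\searrow 0$, and dominated/monotone convergence yields $\|V_n-V\|_2\to0$; then $\|V_nW-VW\|_1\le\|V_n-V\|_2\|W\|_2\to0$. The $H^1$ assumption on $g$ enters only through a Cameron--Martin--Girsanov change of measure to reduce to the flat barrier. This factorization is what makes the trace-norm limit go through cleanly, and is the piece your sketch is missing.
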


In other words, {\it hitting probabilities of curves by $\aipo$ can be expressed in terms of Fredholm determinants
of the analogous hitting probabilities for Brownian motion.}  

One can check easily using the Feynman-Kac formula that the kernel of
$\Lambda^g_{[\ell,r]}$ has the following form:
\begin{equation}
\Lambda^g_{[\ell,r]}(x,y)=\frac{e^{-(x-y)^2/4(r-\ell)}}{\sqrt{4\pi(r-\ell)}}
  \pp_{\hat b(\ell)=x,\hat b(r)=y}\!\left(\hat b(s)\leq g(s)\text{ on }[\ell,r]\right),\label{eq:Lambda}
\end{equation}
where the probability is computed with respect to a Brownian bridge $\hat b(s)$ from $x$
at time $\ell$ to $y$ at time $r$ and with diffusion coefficient $2$. We remark that the
kernel $-B_0+\Lambda^g_{[\ell,r]}e^{-(r-\ell)\Delta}B_0$ is not trace class, but as in the
discrete case (see Remark \ref{rem:t}) we will show that there is conjugate operator which
is, see Proposition \ref{prop:traceclass}.

The corresponding formula for the Airy$_2$ process, provided in Theorem \gref{thm:aiL}, is
the same as \eqref{eq:basic1} after replacing $-\Delta$ by $H$ and $B_0$ by $\K$. The
corresponding boundary value operator $\Theta^g_{[\ell,r]}$ in that case is actually more
complicated than $\Lambda^g_{[\ell,r]}$, as in our case there is no potential term in the
partial differential equation in \eqref{eq:bdval}. 
%\note{I think we can erase the rest
%  now} As we mentioned earlier, a formula for the Airy$_2$ process given in terms of the
%determinant of an operator involving a boundary value problem for Brownian motion is not
%so unexpected, as $\aip$ is the limit of the rescaled top line in a system of
%non-intersecting Brownian motions (Dyson's Brownian motion for the Gaussian Unitary
%Ensemble), so the Karlin-McGregor formula suggests that such a formula could hold (see
%\cite{bcr}). On the other hand, there is no known analogous construction of the Airy$_1$
%process (in particular, see \cite{borFerrPrah}), for which the associated determinantal
%process is signed, see \cite{bfp}, and thus it is not at all apparent where a formula like
%\eqref{eq:basic1} (or even \eqref{eq:airy1fd}) is coming from.

\vs

\paragraph{\bf Acknowledgements}
Both authors were supported by the Natural Science and Engineering Research Council of
Canada, and DR was supported by a Fields-Ontario Postdoctoral Fellowship and by Fondecyt
Grant 1120309. The authors thank Ivan Corwin and Alexei Borodin for interesting and useful
conversations.

\section{Proof of the determinantal formula}
\label{sec:proofs}

Throughout this section and the next we will denote by $\|\cdot\|_1$ and $\|\cdot\|_2$
respectively the trace class and Hilbert-Schmidt norms of operators on $L^2(\rr)$ (see
Section \gref{sec:aiL} for the definitions or \cite{simon} for a complete treatment).

\begin{proof}[Proof of Proposition \ref{prop:mLaplacian}]
  Recall that $\Ai(z)=(2\pi\I)^{-1}\int_{\Gamma_c} du\,e^{u^3/3-uz}$, where
  $\Gamma_c=\{c+\I y,\,y\in\rr\}$ for
  any fixed $c>0$. Then
  \[e^{s\Delta}\varphi_{t,y}(x)=\frac{1}{2\pi\I}\int_{-\infty}^\infty dz\int_{\Gamma_{c}} du
  \frac{1}{\sqrt{4\pi s}}e^{-(x-z)^2/4s-2t^3/3-(z+y)t+u^3/3-u(z+y+t^2)}.\]
  We can compute the $z$ integral first, which is just a Gaussian integral, to obtain
   \[e^{s\Delta}\varphi_{t,y}(x)=\frac{1}{2\pi\I}\int_{\Gamma_{c}} du\,e^{\frac{1}{3}
       (t+u) ((3 s-2 t+u) (t+u)-3 (x+y))}.\]
  Shifting $u$ to $u-s$ we get
  \[e^{s\Delta}\varphi_{t,y}(x)=\frac{1}{2\pi\I}\int_{\Gamma_{c+s}}
  e^{u^3/3-u(x+y+(t-s)^2)-(x+y)(t-s)-2(t-s)^3/3}=\varphi_{t-s,y}(x),\]
  which proves \eqref{eq:varphisg}. The remaining statements in the proposition follow
  directly from this identity.
\end{proof}

We turn now to the proof of Theorem \ref{thm:airy1}. The argument is based on the
derivation of the equivalence of \eqref{eq:fExtAiry2} and \eqref{eq:airy2fd} for the
Airy$_2$ case given by \citet{prolhacSpohn}, and in fact the algebraic procedure we will
use is basically equivalent to theirs. In the case of the Airy$_1$ process one has to make sure
throughout the proof that the algebraic manipulations are being done on operators which
are trace class, so that the Fredholm determinants considered are well defined. This is
done by rewriting the algebraic procedure of \cite{prolhacSpohn} so that in each step one
can conjugate by the correct operators and check that the resulting conjugated
operators are trace class as needed.

\begin{rem}\label{rem:airy2case1}
  Our proof of Theorem \ref{thm:airy1} can be used to complete the details and
  provide all the necessary justifications in the proof given in \cite{prolhacSpohn} for
  the Airy$_2$ case. In one sense the argument in that case is simpler, because the
  kernels in \eqref{eq:fExtAiry2} and \eqref{eq:airy2fd} are already trace
  class. Nevertheless the Airy$_2$ case presents an additional difficulty, namely that
  even for $t>0$ the operator $e^{-tH}$ does not map $L^2(\rr)$ into itself (note that
  this issue does not arise in the Airy$_1$ case, as $e^{t\Delta}$ is clearly a bounded operator
  acting on $L^2(\rr)$ for $t>0$). We will explain in Remark \ref{rem:airy2case2} how this
  can be addressed, and in particular how the proof below has to be changed to provide a
  rigorous proof for the Airy$_2$ case.
\end{rem}

\begin{proof}[Proof of Theorem \ref{thm:airy1}]
  We will retain most of the notation of \cite{prolhacSpohn}, and as in that paper we use
  sans-serif fonts (e.g. $\sT$) for operators on
  $L^2(\{t_1,\dots,t_n\}\times\mathbb{R})$. Such an operator can be regarded as an
  operator-valued matrix $\big(\sT_{i,j}\big)_{i,j=1,\dots,n}$ with entries $\sT_{i,j}\in
  L^2(\rr)$ acting on $f\in L^2(\rr)^{n}$ as $(\sT f)_i=\sum_{j=1}^n\sT_{i,j}f_j$ (or,
  more precisely, as an operator acting on $\rr^{n}\otimes L^2(\rr)$). We will use serif
  fonts for the matrix entries (e.g. $\sT_{i,j}=T$ for some $T\in L^2(\rr)$). All
  determinants throughout this proof are computed on $L^2(\{t_1,\dots,t_n\}\times\rr)$
  unless otherwise indicated.

  Recall from Proposition \ref{prop:mLaplacian} that $e^{t\Delta}B_0$ satisfies the
  semigroup property $e^{s\Delta}e^{t\Delta}B_0=e^{(s+t)\Delta}B_0$ for all
  $s,t\in\rr$. We will use this fact several times below. We will also use the fact that,
  since $B_0(x,y)$ depends only on $x+y$, $e^{t\Delta}$ and $B_0$ commute for
  $t>0$. Finally, as explained after the proof of Proposition \ref{prop:mLaplacian}, we
  may (and will) assume that $t_i>0$ for $i=1,\dots,n$.

  Let $\sK={\rm f}^{1/2}K^1_{\rm ext}{\rm f}^{1/2}$, with $K^1_{\rm ext}$ defined through
  \eqref{eq:extAiry1} and f as in \eqref{eq:detAiry1}. Using the above interpretation
  $\sK$ can be written as
  \begin{equation}\label{eq:sK}
    \sK=\sP(\sT^{-}\sK^0+\sT^{+}(\sK^0-\sI))\sP,
  \end{equation}
  where
  \begin{equation}
    \sK^{0}_{ij}=B_0\uno{i=j},\quad \sP_{i,j}=P_{x_{j}}\uno{i=j},
  \end{equation}
  with $P_a=I-\bar P_a$ denoting projection onto the interval $[a,\infty)$, and $\sT^{-}$,
  $\sT^{+}$ are lower triangular, respectively strictly upper triangular, and defined by
  \begin{equation}
    \sT^{-}_{ij} = e^{-(t_{i}-t_{j})\Delta}\uno{i\geq j},\quad
    \sT^{+}_{ij}=e^{-(t_{i}-t_{j})\Delta}\uno{i < j}.
  \end{equation}
  Observe that the all heat kernels in $\sT^+$ have positive parameters, while those in
  $\sT^-$ have negative parameters but appear applied after $B_0$ in the expression for
  $\sK$ in \eqref{eq:sK}, so Proposition \ref{prop:mLaplacian} ensures that \eqref{eq:sK}
  makes sense.

  As we mentioned in Remark \ref{rem:t}, it is proved in \cite{bfp} that there is an
  invertible operator $\sV$ such that $\sV\sK\sV^{-1}$ is trace class. Explicitly, $\sV$
  is a (diagonal) multiplication operator given by
  \[\sV_{i,j}=V_i\uno{i=j}\qquad\text{with}\quad V_if(x)=(1+x^2)^{-2i}f(x).\]
  Since $\sV\sP\sT^+\sP\sV^{-1}$ is strictly upper triangular, $\sI+\sV\sP\sT^+\sP\sV^{-1}$ is
  invertible, and then we can write
  \begin{equation}
    \label{eq:detDec}
    \det\!\big(\sI-\sV\sK\sV^{-1}\big)
    =\det\!\big((\sI+\sWo)(\sI-(\sI+\sWo)^{-1}\sWt)\big)
  \end{equation}
  with
  \begin{equation}
    \sWo=\sV\sP\sT^+\sP\sV^{-1},\qquad\sWt=\sV\sP(\sT^-+\sT^+)\sK^0\sP\sV^{-1}.\label{eq:sWs}
  \end{equation}
  We remark that $\sWo$ is trace class by Lemma A.2 in \cite{bfp}.

  Next we want to obtain an explicit expression for $(\sI+\sWo)^{-1}\sWt$. Observe that
  \begin{equation}\label{eq:IT+}
    \big[(\sI+\sT^{+})^{-1})\big]_{i,j}=\uno{i=j}-e^{-(t_{i}-t_{i+1})\Delta}\uno{i=j-1},
  \end{equation}
  which can be checked directly using the semigroup property of the heat kernel. In
  particular $\sI+\sT^+$ is invertible, so we can write
  \begin{equation}
    (\sI+\sWo)^{-1}\sWt=(\sI+\sWo)^{-1}\sV\sP(\sT^-+\sT^+)(\sI+\sT^+)^{-1}\sK^0(\sI+\sT^+)\sP\sV^{-1},\label{eq:decsW}
  \end{equation}
  where we have used the fact that $e^{t\Delta}$ and $B_0$ commute for $t>0$, and hence so
  do $\sT^+$ and $\sK^0$. Using \eqref{eq:IT+} we deduce that 
  \begin{equation}
    \begin{aligned}
      \big[(\sT^{-}+\sT^{+})(\sI+\sT^{+})^{-1}\sK^0\big]_{i,j}
      &=e^{-(t_i-t_j)\Delta}B_0-e^{-(t_i-t_{j-1})\Delta}e^{-(t_{j-1}-t_j)\Delta}B_0\uno{j>1}\\
      &=e^{-(t_{i}-t_{1})\Delta}B_0\uno{j=1}.
    \end{aligned}\label{eq:T-T+}
  \end{equation}
  Note that only the first column of this matrix has non-zero entries.

  Observe now that, since $\sV\sP\sT^+\sP\sV^{-1}$ is strictly upper triangular, we have
  $(\sV\sP\sT^+\sP\sV^{-1})^{n+1}=0$, which implies that
  \begin{equation}
    (\sI+\sWo)^{-1}=\sum_{k=0}^n(-1)^k(\sV\sP\sT^{+}\sP\sV^{-1})^k.\label{eq:inv}
  \end{equation}
  On the other hand by \eqref{eq:T-T+} we have for $0\leq k\leq n-i$
  \begin{multline}\label{eq:termInv}
    \left[(\sV\sP\sT^+\sP\sV^{-1})^k\sV\sP(\sT^-+\sT^+)(\sI+\sT^+)^{-1}\sK^0\right]_{i,1}\\
    =\quad\smashoperator{\sum_{i<a_1<\dots<a_k\leq n}}\quad
    V_iP_{x_{i}}e^{-(t_i-t_{a_1})\Delta}P_{x_{a_1}}e^{-(t_{a_{1}}-t_{a_2})\Delta} \dotsm
    P_{x_{a_{k-1}}}e^{-(t_{a_{k-1}}-t_{a_k})\Delta}P_{x_{a_k}}e^{-(t_{a_k}-t_{1})\Delta}B_0,
  \end{multline}
  which follows from \eqref{eq:T-T+} and the definition of $\sP\sT^+\sP$, while for
  $k>n-i$ the left side above equals 0 (and the case $k=0$ is interpreted as
  $V_iP_{x_i}e^{-(t_i-t_1)\Delta}B_0$). Replacing each factor $P_{x}$ except the first one by
  $I-\bar P_{x}$ and using the semigroup property for the heat kernel we deduce that the
  last expression equals
  \begin{multline}
    \sum_{m=0}^{k}\sum_{i=b_0<b_1<\dots<b_m\leq n}\binom{n-i-m}{k-m}(-1)^{m}V_{b_0}
    P_{x_{b_0}}e^{-(t_{b_0}-t_{{b_1}})\Delta}\bar
    P_{x_{b_1}}e^{-(t_{b_1}-t_{b_2})\Delta}\\
    \hspace{2.5in}\dotsm\bar P_{x_{{b_{m-1}}}}e^{-(t_{b_{m-1}}-t_{b_m})\Delta} \bar
    P_{x_{b_m}}e^{-(t_{b_m}-t_{1})\Delta}B_0.
  \end{multline}
  Summing the above expression times $(-1)^k$ from $k=0$ to $k=n-i$ and interchanging the
  order of summation leads to
  \begin{multline}
    \sum_{m=0}^{n-i}\sum_{k=m}^{n-i}\sum_{i=b_0<b_1<\dots<b_m\leq
      n}\binom{n-i-m}{k-m}(-1)^{k+m}V_{b_0}P_{x_{b_0}}e^{-(t_{b_0}-t_{{b_1}})\Delta}\bar
    P_{x_{b_1}}e^{-(t_{b_1}-t_{b_2})\Delta}\\
    \hspace{2.5in}\dotsm\bar P_{x_{{b_{m-1}}}}e^{-(t_{b_{m-1}}-t_{b_m})\Delta} \bar
    P_{x_{b_m}}e^{-(t_{b_m}-t_{1})\Delta}B_0.
  \end{multline}
  Noting that $\sum_{k=m}^{n-i}\binom{n-i-m}{k-m}(-1)^{k+m}=\uno{m=n-i}$ and recalling
  \eqref{eq:inv} we deduce that
  \begin{equation}
    \begin{aligned}
      &\left[(\sI+\sWo)^{-1}\sV\sP(\sT^-+\sT^+)(\sI+\sT^+)^{-1}\sK^0\right]_{i,j}
      =\uno{j=1}\sum_{i=b_0<b_1<\dots<b_{n-i}\leq n}V_{b_0}
      P_{x_{b_0}}e^{-(t_{b_0}-t_{{b_1}})\Delta}\\
      &\hspace{1.4in}\cdot\bar P_{x_{b_1}}e^{-(t_{b_1}-t_{b_2})\Delta}\dotsm\bar
      P_{x_{{b_{n-i-1}}}}e^{-(t_{b_{n-i-1}}-t_{b_{n-i}})\Delta} \bar
      P_{x_{b_{n-i}}}e^{-(t_{b_{n-i}}-t_{1})\Delta}B_0\\
      &\hspace{0.25in}=\uno{i=n,j=1}V_nP_{x_n}e^{-(t_n-t_1)\Delta}B_0\\
      &\hspace{0.4in}+\uno{i<n,j=1}V_iP_{x_i}e^{-(t_{i}-t_{{i+1}})\Delta}\bar
      P_{x_{i+1}}e^{-(t_{i+1}-t_{i+2})\Delta} \dotsm\bar
      P_{x_{n-1}}e^{-(t_{n-1}-t_{n})\Delta} \bar P_{x_n}e^{-(t_n-t_{1})\Delta}B_0.
    \end{aligned}\label{eq:preInduction}
  \end{equation}
  Post-multiplying by $(\sI+\sT^+)\sP\sV^{-1}$ we finally obtain from this and
  \eqref{eq:decsW} that
  \begin{multline}
    \label{eq:finaldecsW}
    \left[(\sI+\sWo)^{-1}\sWt\right]_{i,j}=\uno{i=n}V_nP_{x_n}e^{-(t_n-t_j)\Delta}B_0P_{x_j}V_j^{-1}\\
    +\uno{i<n}V_iP_{x_i}e^{-(t_{i}-t_{{i+1}})\Delta}\bar
    P_{x_{i+1}}e^{-(t_{i+1}-t_{i+2})\Delta} \dotsm\bar P_{x_n}e^{-(t_n-t_j)\Delta}B_0P_{x_j}V_j^{-1},
  \end{multline}
  where we have used again the fact that $e^{t\Delta}$ commutes with $B_0$ for $t>0$.

  At this stage we can check that $(\sI+\sWo)^{-1}\sWt$ is trace class. In fact it is
  enough to check (see (A.5) in \cite{bfp}) that each entry of this operator-valued matrix
  is trace class. The case $i=n$ was checked in Lemma A.3 in \cite{bfp}, while for the
  case $i<n$ we can use a similar strategy. Since $V_i$ and $V_j^{-1}$ are multiplication operators, they
  commute with $P_a$ for any $a$, and then choosing $-L\leq\min\{x_i,x_j\}$ we have
  \begin{multline}
    \left\|\left[(\sI+\sWo)^{-1}\sWt\right]_{i,j}\right\|_1
    =\left\|V_iP_{x_i}P_{-L}R_ie^{-(t_n-t_j)\Delta}B_0P_{-L}P_{x_j}V_j^{-1}\right\|_1\\
    =\left\|P_{x_i}P_{-L}V_iR_ie^{-(t_n-t_j)\Delta}B_0V_j^{-1}P_{-L}P_{x_j}\right\|_1
    \leq\left\|P_{-L}V_iR_ie^{-(t_n-t_j)\Delta}B_0V_j^{-1}P_{-L}\right\|_1,
  \end{multline}
  where $R_i=e^{-(t_{i}-t_{{i+1}})\Delta}\bar P_{x_{i+1}}e^{-(t_{i+1}-t_{i+2})\Delta}
  \dotsm\bar P_{x_n}$ and we have used the first of the inequalities
  \begin{equation}
    \|AB\|_1\leq\|A\|_{\rm op}\|B\|_1,\qquad\|AB\|_2\leq\|A\|_{\rm op}\|B\|_2,
    \qquad\|AB\|_1\leq\|A\|_1\|B\|_1,\label{eq:bdopnorm}
  \end{equation}
  with $\|\cdot\|_{\rm op}$ denoting the operator norm (see \cite{simon}) and
  $\|P_x\|_{\rm op}=1$. Next we remove the projections $P_{-L}$ and think instead of the
  operator $V_iR_ie^{-(t_n-t_j)\Delta}B_0V_j^{-1}$ as acting on $L^2([-L,\infty))$. Using
  again \eqref{eq:bdopnorm} and the fact that the operators $V_i$ and $V_i^{-1}$ commute
  with $\bar P_a$ we have that $\|V_iR_iV_n^{-1}\|_1$ is bounded by
  \[\|V_ie^{-(t_{i}-t_{i+1})\Delta}V_{i+1}^{-1}\bar
  P_{x_{i+1}}\|_1\|V_{i+1}e^{-(t_{i+1}-t_{i+2})\Delta}V_{i+2}^{-1}\bar
  P_{x_{i+2}}\|_1\dots\|V_{n-1}e^{-(t_{n-1}-t_n)\Delta}V_n^{-1}\|_1,\] which is finite
  because each factor is so by Lemma A.2 in \cite{bfp}. Since
  $\|V_ne^{-(t_n-t_j)\Delta}B_0V_j^{-1}\|_1$ (computed in $L^2([-L,\infty)$) is finite by
  Lemma A.3 in \cite{bfp} we deduce by \eqref{eq:bdopnorm} that
  \[\left\|\left[(\sI+\sWo)^{-1}\sWt\right]_{i,j}\right\|_1
  \leq\|V_iR_iV_n^{-1}\|_1\|V_ne^{-(t_n-t_j)\Delta}B_0V_j^{-1}\|_1<\infty.\]

  Going back to \eqref{eq:detDec}, since both $\sWo$ and $(\sI+\sWo)^{-1}\sWt$ are trace
  class, we have
  \begin{equation}
    \det\!\big(\sI-\sV\sK\sV^{-1}\big)=\det\!\big(\sI+\sWo\big)\det\!\big(\sI-(\sI+\sWo)^{-1}\sWt\big)
    =\det\!\big(\sI-(\sI+\sWo)^{-1}\sWt\big),\label{eq:detRed}
  \end{equation}
  where the second equality follows from the fact that, since $\sWo$ is strictly upper
  triangular, its only eigenvalue is 0, and thus $\det(\sI+\sWo)=1$. Now let $\sU$ be
  given by $\sU_{i,j}=U\uno{i=j}$ where $U$ is the (diagonal) multiplication operator
  introduced in Proposition \ref{prop:traceclass}. Then x to \eqref{eq:decsW} we
  have
  \[(\sI+\sWo)^{-1}\sWt=\tsWo\tsWt\]
  with $\tsWo=(\sI+\sWo)^{-1}\sV\sP(\sT^-+\sT^+)(\sI+\sT^+)^{-1}\sK^0\sU^{-1}$ and
  $\tsWt=\sU(\sI+\sT^+)\sP\sV^{-1}$.
  We have already checked that $\tsWo\tsWt$ is trace class, so if we prove that
  $\tsWt\tsWo$ is also trace class we can deduce from the cyclic property of determinants
  and \eqref{eq:detRed} that
  \begin{equation}
    \label{eq:detFinalExt}
    \det\!\big(\sI-\sV\sK\sV^{-1}\big)=\det\!\big(\sI-\tsWt\tsWo\big).
  \end{equation}

  Recall from \eqref{eq:preInduction} that only the first column of
  $(\sI+\sWo)^{-1}\sV\sP(\sT^-+\sT^+)(\sI+\sT^+)^{-1}\sK^0$ has non-zero entries. Since
  $\sU(\sI+\sT^+)\sP\sV^{-1}$ is upper triangular and $\sU^{-1}$ is diagonal, the same is true for
  $\tsWt\tsWo$. %Using \eqref{eq:inv} it is straightforward to check that all the $\sV$'s
  % and $\sV^{-1}$'s cancel from $\tsWt\tsWo$:
  % \[\tsWt\tsWo=\sU(\sI+\sT^+)\sP(\sI+\sWo)^{-1}(\sT^-+\sT^+)(\sI+\sT^+)^{-1}\sK^0\sU^{-1}.\]
  Observe that $\sV^{-1}(\sI+\sWo)^{-1}\sV=(\sI+\sP\sT^+\sP)^{-1}$, so all the $\sV$'s cancel
  in $\tsWt\tsWo$. For the first column of this operator-valued matirx we get using \eqref{eq:preInduction} that
  \begin{equation}
    \begin{aligned}
      &\big(\tsWt\tsWo\big)_{k,1}=Ue^{-(t_k-t_n)\Delta}P_{x_n}e^{-(t_n-t_1)\Delta}B_0U^{-1}\\
      &\qquad+\sum_{i=k}^{n-1}Ue^{-(t_k-t_i)\Delta}P_{x_{i}}e^{-(t_{i}-t_{i+1})\Delta}\bar
      P_{x_{i+1}}
      \dotsm\bar P_{x_{n-1}}e^{-(t_{n-1}-t_{n})\Delta}\bar P_{x_{n}}e^{-(t_n-t_1)\Delta}B_0U^{-1}\\
      &\quad=Ue^{-(t_k-t_n)\Delta}P_{x_n}e^{-(t_n-t_1)\Delta}B_0U^{-1}\\
      &\quad\qquad+\sum_{i=k}^{n-1}Ue^{-(t_k-t_i)\Delta}(I-\bar
      P_{x_{i}})e^{-(t_{i}-t_{i+1})\Delta}\bar P_{x_{i+1}}
      \dotsm\bar P_{x_{n-1}}e^{-(t_{n-1}-t_{n})\Delta}\bar P_{x_{n}}e^{-(t_n-t_1)\Delta}B_0U^{-1}\\
      &\quad=Ue^{-(t_k-t_n)\Delta}P_{x_n}e^{-(t_n-t_1)\Delta}B_0U^{-1}\\
      &\quad\qquad+\sum_{i=k}^{n-1}\bigg[Ue^{-(t_k-t_{i+1})\Delta}\bar P_{x_{i+1}} \dotsm\bar
      P_{x_{n-1}}e^{-(t_{n-1}-t_{n})\Delta}\bar P_{x_{n}}e^{-(t_n-t_1)\Delta}B_0U^{-1}\\
      &\hspace{1.4in}-Ue^{-(t_k-t_{i})\Delta}\bar P_{x_{i}} \dotsm\bar
      P_{x_{n-1}}e^{-(t_{n-1}-t_{n})\Delta}\bar P_{x_{n}}e^{-(t_n-t_1)\Delta}B_0U^{-1}\bigg].
    \end{aligned}
  \end{equation}
  Telescoping the last sum yields
  \begin{equation}
    \begin{aligned}
      \big(\tsWt\tsWo\big)_{k,1}&=Ue^{-(t_k-t_1)\Delta}B_0U^{-1} -U\bar
      P_{x_k}e^{-(t_k-t_{k+1})\Delta}\bar P_{x_{k+1}} \dotsm\bar
      P_{x_{n}}e^{-(t_n-t_1)\Delta}B_0U^{-1}\\
      &=U\!\left[e^{-(t_k-t_n)\Delta}-\bar P_{x_k}e^{-(t_k-t_{k+1})\Delta}\bar P_{x_{k+1}}
        \dotsm\bar P_{x_{n}}\right]\!e^{-(t_n-t_1)\Delta}B_0U^{-1}.
    \end{aligned}\label{eq:opFinal}
  \end{equation}
  Using this last decomposition we get directly from the proof of Proposition
  \ref{prop:traceclass}(a) that $\big(\tsWt\tsWo\big)_{k,1}$ is trace class. This justifies the identity
  \eqref{eq:detFinalExt}, and then since only the first column of $\tsWt\tsWo$ is non-zero
  we deduce that
  \noeqref{eq:opFinal}
  \[\det\!\big(\sI-\sV\sK\sV^{-1}\big)=\det\!\Big(I-\big(\tsWt\tsWo\big)_{1,1}\Big)_{L^2(\rr)}.\]
  The result now follows from the above formula for $\big(\tsWt\tsWo\big)_{k,1}$ with $k=1$.
\end{proof}

\begin{rem}\label{rem:airy2case2}
  A complete proof for the Airy$_2$ case can be obtained from the above argument by
  replacing $-\Delta$ by $H$, $B_0$ by $\K$, and both $\sV$ and $\sU$ by $\sI$. As we
  mentioned in Remark \ref{rem:airy2case1}, this case presents the additional issue that
  the operators $e^{tH}$ involved in $\sT^+$ and $\sT^-$ do not even map $L^2(\rr)$ to
  itself (in fact, note that $H$ has the whole real line as its spectrum). $\sT^-$, which
  is associated to operators $e^{tH}$ with $t>0$, presents no difficulty in the above
  proof. In fact, it always appears applied after $\sK$, which in this case is the
  diagonal matrix with $\K$ in each diagonal entry, so that since $\K$ projects onto the
  negative eigenspace of $H$ (see Remark \ref{rem:t}), each entry in $\sT^-\sK$ is a
  bounded operator acting on $L^2(\rr)$. This is analogous to the fact that, in the
  Airy$_1$ case, the operators $e^{-t\Delta}$ for $t>0$ always appear after $B_0$.

  To deal with $\sT^+$ we start with the formula
  \begin{equation}
    e^{-tH}f(x)=\int_{-\infty}^\infty dy\int_{-\infty}^\infty d\lambda\,e^{\lambda
      t}\Ai(x+\lambda)\Ai(y+\lambda)f(y).\label{eq:etH}
  \end{equation}
  One can check that for any $f\in L^2(\rr)$ the integral is convergent, and thus
  $e^{-tH}f$ is well defined, though not necessarily in $L^2(\rr)$. The key is to notice,
  again using the formula, that for any $a$ the operators $P_ae^{-tH}$ and $e^{-tH}P_a$
  are Hilbert-Schmidt (see \eqref{eq:PaetH}), so that
  $P_ae^{-tH}P_a=(P_ae^{-\frac{t}{2}H})(e^{-\frac{t}{2}H}P_a)$ is trace class by
  \eqref{eq:tracenormbd}. In particular, this implies that the operator $\sWo$ defined in
  \eqref{eq:sWs} (with $\sV=\sI$) is trace class in the Airy$_2$ case. To make sense of
  $(\sI+\sWo)^{-1}\sWt$, as needed in \eqref{eq:detDec}, we can use \eqref{eq:termInv}
  directly together with \eqref{eq:inv} to write
  \begin{multline}\label{eq:altForm}
    \left[(\sI+\sWo)^{-1}\sWt\right]_{i,j}
    =\sum_{k=0}^{n-i}(-1)^{k}\quad\smashoperator{\sum_{i<a_1<\dots<a_k\leq n}}\quad
    P_{x_{i}}e^{-(t_i-t_{a_1})H}P_{x_{a_1}}e^{-(t_{a_{1}}-t_{a_2})H}\\
    \dotsm P_{x_{a_{k-1}}}e^{-(t_{a_{k-1}}-t_{a_k})H}P_{x_{a_k}}e^{-(t_{a_k}-t_{j})H}\K P_{x_j}
  \end{multline}
  (cf. \eqref{eq:finaldecsW}), where the same argument can be applied to show
  that each term is well defined and is in fact trace class. This allows to derive
  \eqref{eq:detRed}, and it is easy to check that deriving \eqref{eq:detFinalExt} via the
  cyclic property of determinants involves no new difficulties.
  \noeqref{eq:altForm}

  A final remark is in order. The operator $\bar P_{x_1}e^{-(t_1-t_2)H}\dotsm
  e^{-(t_{n-1}-t_n)H}\bar P_{x_n}$ appearing in \eqref{eq:airy2fd} is ill-defined
  because, unlike in the preceding discussion, an operator of the form $\bar P_a
  e^{-tH}\bar P_b$ does not map $L^2(\rr)$ to itself. Hence \eqref{eq:airy2fd} should be
  understood as a shorthand notation for
  \begin{align}
    &\pp\!\left(\aip(t_1)\leq x_1,\dotsc,\aip(t_n)\leq x_n\right)\\
    &\hspace{0.3in}=\det\!\Bigg(I-\sum_{i=1}^n\sum_{k=0}^{n-i}(-1)^{k}\quad\smashoperator{\sum_{i<a_1<\dots<a_k\leq n}}\quad
    e^{-(t_1-t_i)H}P_{x_{i}}e^{-(t_i-t_{a_1})H}P_{x_{a_1}}e^{-(t_{a_{1}}-t_{a_2})H}\\
    &\hspace{2.4in}\dotsm P_{x_{a_{k-1}}}e^{-(t_{a_{k-1}}-t_{a_k})H}P_{x_{a_k}}e^{-(t_{a_k}-t_{1})H}\K\Bigg)_{L^2(\rr)},
  \end{align}
  which is obtained from the above proof by working directly with \eqref{eq:termInv}
  instead of \eqref{eq:preInduction}. Alternatively, one can rewrite
  \begin{multline}
    \pp\!\left(\aip(t_1)\leq x_1,\dotsc,\aip(t_n)\leq x_n\right)\\
    =\det\!\left(I-\left[e^{(t_1-t_n)H}-\bar P_{x_1}e^{(t_1-t_2)H}\bar
        P_{x_2}e^{(t_2-t_3)H}\dotsm\bar
        P_{x_n}\right]\!e^{(t_n-t_1)H}K_{\Ai}\right)_{L^2(\rr)}.
  \end{multline}
  The product inside this last determinant was shown to be trace class in Proposition
  \gref{prop:theta} (cf. Proposition \ref{prop:traceclass} below).
\end{rem}

Going back to the Airy$_1$ process, we turn next to proving the existence of trace class
operators which are conjugate to the ones appearing in \eqref{eq:airy1fd} and
\eqref{eq:basic1}. Given $\mathbf{x}=(x_1,\dots,x_n)$ and $\mathbf{t}=(t_1,\dots,t_n)$ with $t_i<t_{i+1}$
let
\begin{equation}\label{lambdathing}\Lambda^{\mathbf{x}}_{\mathbf{t}}=\bar P_{x_1}e^{-(t_1-t_2)\Delta}\bar P_{x_2}e^{-(t_2-t_3)\Delta}\dotsm
      e^{-(t_{n-1}-t_n)\Delta}\bar P_{x_n}.\end{equation}
For the case $t_i=\ell+\frac{i-1}{n-1}(r-\ell)$, $i=1,\dotsc,n$, and $x_i=g(t_i)$ for some $g\in
H^1([\ell,r])$ we write
\[\Lambda^g_{n,[\ell,r]}=\bar P_{g(t_1)}e^{-(t_1-t_2)\Delta}\bar P_{g(t_2)}e^{-(t_2-t_3)\Delta}\dotsm
  e^{-(t_{n-1}-t_n)\Delta}\bar P_{g(t_n)}.\]

Let $U$ be the operator defined by $Uf(x)=e^{-2(r-\ell)x}f(x)$.

\begin{prop}\label{prop:traceclass}
  Fix $\ell<r$ and let $g\in H^1([\ell,r])$.
  \begin{enumerate}[label=(\alph*)]
  \item $U\big(B_0-\Lambda^{\bf x}_{\bf t}e^{-(t_n-t_1)\Delta}B_0\big)U^{-1}$ and
    $U\big(B_0-\Lambda^{g}_{[\ell,r]}e^{-(r-\ell)\Delta}B_0\big)U^{-1}$ are trace class operators on
    $L^2(\rr)$.
  \item $\big\|U\big(B_0-\Lambda^{g}_{n,[\ell,r]}e^{-(r-\ell)\Delta}B_0\big)U^{-1}\big\|_1$
    is bounded uniformly in $n$.
  \item Let $n_k=2^k$. Then
    \[\lim_{k\to\infty}\big\|U\big(B_0-\Lambda^{g}_{n_k,[\ell,r]}e^{-(r-\ell)\Delta}B_0\big)U^{-1}-
      U\big(B_0-\Lambda^{g}_{[\ell,r]}e^{-(r-\ell)\Delta}B_0\big)U^{-1}\big\|_1=0.\]
  \end{enumerate}
\end{prop}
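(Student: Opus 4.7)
My plan is to treat parts (a), (b), (c) jointly: the discrete case of (a) provides the template, (b) furnishes uniform trace-norm bounds, and (c) promotes these to trace-norm convergence, from which the continuum case of (a) is immediate. The central device is a telescoping identity. Inserting $I=\bar P_{x_i}+P_{x_i}$ at each projection slot and using the group identity $e^{s\Delta}e^{-s\Delta}B_0=B_0$ from Proposition \ref{prop:mLaplacian}, I rewrite
\begin{equation*}
B_0-\Lambda^{\mathbf{x}}_{\mathbf{t}}e^{-(t_n-t_1)\Delta}B_0=\sum_{i=1}^n A_i,
\end{equation*}
where $A_i=\bar P_{x_1}e^{-(t_1-t_2)\Delta}\dotsm\bar P_{x_{i-1}}e^{-(t_{i-1}-t_i)\Delta}P_{x_i}e^{-(t_i-t_1)\Delta}B_0$. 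Since $U$ is a multiplication operator it commutes with all projections, so conjugation only affects the heat kernels and the right-hand factor. For each $i$ I further factor $e^{-(t_i-t_1)\Delta}B_0=e^{-(t_i-t_1)\Delta/2}\cdot e^{-(t_i-t_1)\Delta/2}B_0$ (invoking Proposition \ref{prop:mLaplacian} again) and sandwich an auxiliary multiplication $V^{-1}V$ with $Vf(x)=e^{-\beta x}f(x)$; then $UA_iU^{-1}$ becomes a product of two Hilbert--Schmidt pieces provided $\beta$ lies in an appropriate range depending on $t_n-t_1$ and $r-\ell$. The HS estimate for the right-hand piece uses the explicit formula $e^{-t\Delta/2}B_0(x,y)=e^{-t^3/12-(x+y)t/2}\Ai(x+y+t^2/4)$ and the super-exponential decay of $\Ai$ at $+\infty$; for the left-hand piece the kernel is a truncated shifted Gaussian with decay controlled by $\beta$. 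The conjugated heat kernels $Ue^{t\Delta}U^{-1}$ sandwiched between the projections are bounded on $L^2(\rr)$ (explicitly a shifted Gaussian). This yields the first claim of (a).

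For (b) I apply the same telescoping with $x_i=g(t_i)$ and equispaced $t_i$. The key observation is probabilistic: by \eqref{eq:Lambda}, the prefix $\bar P_{g(t_1)}e^{-(t_1-t_2)\Delta}\dotsm\bar P_{g(t_{i-1})}e^{-(t_{i-1}-t_i)\Delta}P_{g(t_i)}$ encodes the Brownian bridge event ``$\hat b$ stays below $g$ at $t_1,\dotsc,t_{i-1}$ and exceeds $g$ at $t_i$''. These events are disjoint over $i$, so their probabilities sum to at most $1$. With the HS factorization of the previous paragraph, the HS norm of the left piece of $UA_iU^{-1}$ is bounded by the corresponding bridge probability up to a factor independent of $i$ and $n$ (the right piece involves only the fixed time $r-\ell$), so $\sum_i\|UA_iU^{-1}\|_1$ is uniformly bounded.

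For (c) the nested dyadic sampling makes $\Lambda^g_{n_k,[\ell,r]}(x,y)$ pointwise monotonically decreasing in $k$ with limit $\Lambda^g_{[\ell,r]}(x,y)$. The difference $U(\Lambda^g_{n_k,[\ell,r]}-\Lambda^g_{[\ell,r]})e^{-(r-\ell)\Delta}B_0U^{-1}$ can then be put in the same HS-factored form, with the pointwise-vanishing bridge-probability difference sitting in the left factor. The uniform bound from (b) supplies a dominating integrable majorant, so both HS factors converge in HS-norm by dominated convergence, and their product converges to zero in trace norm. The trace class statement for the continuum operator in (a) follows immediately.

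The main obstacle I anticipate is the Hilbert--Schmidt factorization: the weight $\beta$ must be strictly below $2(r-\ell)$ to integrate the left factor, while simultaneously leaving enough exponential decay on the right to beat the slowly decaying oscillatory tail of $\Ai$ at $-\infty$; this is precisely why the exponent in the definition of $U$ is tuned to $r-\ell$. A secondary subtlety is that pointwise monotone convergence of the kernels does not directly yield trace-norm convergence, and it is the HS-factored framework (rather than a direct Gr\"umm-type argument on positive operators) that makes the limit in (c) rigorous.
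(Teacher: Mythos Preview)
Your telescoping identity is correct, and your treatment of (c)---pointwise monotone convergence of the bridge kernels together with dominated convergence in Hilbert--Schmidt norm---is essentially the paper's argument. However, the programme for (a) and especially (b) has two genuine gaps.

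First, the factorization $e^{-(t_i-t_1)\Delta}B_0 = e^{-(t_i-t_1)\Delta/2}\cdot e^{-(t_i-t_1)\Delta/2}B_0$ is not legitimate: since $t_i>t_1$, the left factor $e^{-(t_i-t_1)\Delta/2}$ is a \emph{backward} heat operator, which is not defined on $L^2(\rr)$. Proposition~\ref{prop:mLaplacian} makes sense of $e^{-t\Delta}B_0$ as a whole, not of $e^{-t\Delta}$ alone. Your parenthetical remark that ``the right piece involves only the fixed time $r-\ell$'' in fact contradicts this split, since $e^{-(t_i-t_1)\Delta/2}B_0$ depends on $i$. The natural repair is to insert $e^{(t_n-t_i)\Delta}e^{-(t_n-t_i)\Delta}$ instead, so that $A_i=L_i\cdot e^{-(t_n-t_1)\Delta}B_0$ with $L_i=\bar P_{x_1}e^{(t_2-t_1)\Delta}\dotsm P_{x_i}e^{(t_n-t_i)\Delta}$ a bona fide forward-in-time operator and a right factor genuinely independent of $i$.

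Second, even with this repair, summing the individual trace norms does not give the uniform bound in (b). Writing $L_i(x,y)=p_i(x,y)\cdot(\text{heat kernel over }t_n-t_1)$, where $p_i$ is the probability that the bridge first exceeds $g$ at step $i$, disjointness gives $\sum_i p_i(x,y)\le 1$ pointwise; but this controls $\sum_i\|L_i\|_2^2$, not $\sum_i\|L_i\|_2$, and Cauchy--Schwarz on the latter leaves a stray factor $\sqrt{n}$. The paper sidesteps this by not telescoping: it factors
\[
B_0-\Lambda\,e^{-s\Delta}B_0=(e^{s\Delta}-\Lambda)\cdot(e^{-s\Delta}B_0)
\]
and bounds the Hilbert--Schmidt norm of the single operator $e^{s\Delta}-\Lambda$ (after conjugation) directly via the total crossing probability $\sum_i p_i$. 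In other words, your telescoping, once summed back before estimating, collapses exactly to the paper's factorization; estimating term by term throws away what makes the bound uniform. Finally, you do not say how to pass from $g\equiv0$ (where the bridge crossing probability is explicit) to general $g\in H^1$; the paper handles this by Cameron--Martin--Girsanov.
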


\begin{proof}
  The proof is similar to that of Proposition \gref{prop:theta}, although here using
  the conjugated kernels is crucial.

  Assume first that $g(t)=0$ and write $s=r-\ell$. We begin by considering the second
  operator in (a). Let $\varphi(z)=\sqrt{1+z^2}$ and write
  \[V(x,z)=\big(e^{s\Delta}-\Lambda^{g}_{[\ell,r]}\big)(x,z)e^{-2xs}\varphi(z)e^{-2zs}
  \quad\text{and}\quad
  W(z,y)=\big(e^{-s\Delta}B_0\big)(z,y)\varphi(z)^{-1}e^{2zs}e^{2ys}.\] Then
  \[U\Big(B_0-\Lambda^{g}_{[\ell,r]}e^{-s\Delta}B_0\Big)U^{-1}=VW.\] Since
  \begin{equation}
    \|VW\|_1\leq\|V\|_2\|W\|_2\label{eq:tracenormbd}
  \end{equation}
  (see \cite{simon}) it is enough to prove that
  $\|V\|_2<\infty$ and $\|W\|_2<\infty$.

  The estimate for $\|W\|_2$ is simple: using \eqref{eq:e-tDB0},
  \begin{equation}
  \begin{aligned}
    \|W\|^2_2&=\int_{\rr^2}dx\,dy\,
    \frac{e^{-4s^3/3+2(x+y)s}}{\varphi(x)^2}\!\Ai(x+y+s^2)^2
    =\int_{\rr^2}dx\,dy\,\frac {e^{-4s^3/3+2ys}}{\varphi(x)^2}\!\Ai(y+s^2)^2\\
    &=\|\varphi^{-1}\|^2_2\int_{-\infty}^\infty dy\,e^{-4s^3/3+2ys}\!\Ai(y+s^2)^2.
  \end{aligned}\label{eq:bdW}
  \end{equation}
  The last integral is finite thanks to the bounds
  \begin{equation}
    |\!\Ai(z)|\leq Ce^{-\frac23z^{3/2}}\text{ for }z\geq0,\qquad
    |\!\Ai(z)|\leq C\text{ for }z<0\label{eq:airyBd}
  \end{equation}
  for some constant $C>0$ (see (10.4.59-60) in \cite{abrSteg}), and thus $\|W\|_2<\infty$.

  For $V$, recalling that we are taking $g(t)=0$, we may shift time by $-(\ell+r)/2$ in
  the definition of $\Lambda^g_{[\ell,r]}$ to deduce that
  $\Lambda^g_{[\ell,r]}=\Lambda^g_{[-s/2,s/2]}$, and then by \eqref{eq:Lambda} we have
  \[\Lambda^{g}_{[\ell,r]}(x,y)
  =\frac{e^{-(x-y)^2/4s}}{\sqrt{4\pi s}}\pp_{\hat b(-s/2)=x,\hat b(s/2)=y} \!\left(\hat
    b(t)\leq0\,\text{ on }[-s/2,s/2]\right).\] Therefore
  \begin{equation}
    V(x,y)=\varphi(y)\frac{e^{-(x-y)^2/4s-2(x+y)s}}{\sqrt{4\pi s}}\pp_{\hat b(-s/2)=x,\hat b(s/2)=y}
    \!\left(\hat b(t)\geq 0\,\text{ for some }t\in[-s/2,s/2]\right).\label{eq:bridgeProb}
  \end{equation}
  The last crossing probability equals $e^{-xy/s}$ if $x\leq0,y\leq0$ and 1 otherwise (see
  page 67 in \cite{handbookBM}), and thus
  \begin{multline}
    \|V\|_2^2=\frac{1}{4\pi s}\int_{\rr^2\setminus(-\infty,0]^2}dx\,dy
    \,(1+y^2)\big[e^{-(x-y)^2/4s-2(x+y)s}\big]^2\\
    +\frac{1}{4\pi s}
    \int_{(-\infty,0]^2}dx\,dy\,(1+y^2)\big[e^{-(x+y)^2/4s-2(x+y)s}\big]^2.\label{eq:bdHS}
  \end{multline}
  Both Gaussian integrals can be easily seen to be finite, so we have shown that
  $\|V\|_2<\infty$.

  For the discrete time kernel we can use the same argument. To simplify notation we will
  write the proof for the kernel of the form $\Lambda^g_{n,[\ell,r]}$ (with $g=0$), the same proof
  works for $\Lambda^{\bf x}_{\bf t}$. We decompose the kernel as
  \[U\big(B_0-\Lambda^{g}_{n,[\ell,r]}e^{-(r-\ell)\Delta}B_0\big)U^{-1}=V_{n}W,\]
  where
  \[V_{n}(x,y)=\varphi(y)\frac{e^{-(x-y)^2/4s-2(x+y)s}}{\sqrt{4\pi s}}\pp_{\hat
    b^n(-s/2)=x,\hat b^n(s/2)=y} \!\left(\hat b^n(s)\leq 0\text{ on }[-s/2,s/2]\right)\]
  and $\hat b^n$ is a discrete time random walk with Gaussian jumps with mean 0 and
  variance $s/n$, started at time $-s/2$ at $x$, conditioned to hit $y$ at time $s/2$, and
  jumping at times $t^n_i=-s/2+\frac{i-1}{n-1}s$, $i\geq1$ (in the case of a kernel
  $\Lambda^{\bf x}_{\bf t}$ this random walk is not time-homogeneous, but this does not introduce any
  issues below). We deduce that
  \begin{multline}\label{eq:diff2}
    \big(e^{-(r-\ell)H}-\Lambda^g_{n,[\ell,r]}\big)(x,y)=\frac{\varphi(y)}{\sqrt{4\pi s}}
    e^{-(x-y)^2/4s-2(x+y)s}\\
    \cdot\pp_{\hat b^n(-s/2)=x,\hat b^n(s/2)=y}\!\left(\hat b^n(t^n_i)\geq 0\text{ for some
        }i\in\{1,\dotsc,n\}\right).
  \end{multline}
  A simple coupling argument (see the next paragraph) shows that the last probability is
  less than the corresponding one for the Brownian bridge, and thus we obtain for
  $\|e^{-(r-\ell)H}-\Lambda^g_{n,[\ell,r]}\|_2$ the same bound as the one we get for
  $\|e^{-(r-\ell)H}-\Lambda^g_{[\ell,r]}\|_2$ from \eqref{eq:bdHS}. This bound is, in
  particular, independent of $n$, so we have proved (a) and (b).

  To prove (c) we use again the above decompositions into $VW$ and $V_nW$. Our goal is to
  show that $\|V_{n_k}W-VW\|_1\to0$ as $k\to\infty$. Since
  $\|V_{n_k}W-VW\|_1\leq\|V_{n_k}-V\|_2\|W\|_2$ by \eqref{eq:tracenormbd} and we already
  know that $\|W\|_2<\infty$, all that is left is to show that
  \[\|V_{n_k}-V\|_2\xrightarrow[k\to\infty]{}0.\]
  Couple the Brownian bridge $\hat b$ and the conditioned random walk $\hat b^{n_k}$ by
  simply letting $\hat b^{n_k}(t^{n_k}_i)=\hat b(t^{n_k}_i)$ for each
  $i=1,\dotsc,{n_k}$. Since the Brownian bridge hits the positive half-line whenever the
  conditioned random walk does, it is clear that
  \begin{equation}
    \left|V_{n_k}(x,y)-V(x,y)\right|
    =\frac{e^{-(x-y)^2/4s-2(x+y)s}}{\sqrt{4\pi s}}q_{n_k}(x,y),\label{eq:VnkV}
  \end{equation}
  where $q_{n_k}(x,y)$
  is the probability that the Brownian bridge $\hat b(t)$ hits the positive half-line for $t\in[-s/2,s/2]$
  but not for any $t\in\{t^{n_k}_1,\dotsc,t^{n_k}_{{n_k}}\}$.  Since every point is
  regular for one-dimensional Brownian motion, $q_{n_k}(x,y)\searrow0$ as $k\to\infty$ for
  every fixed $x,y$, and thus the monotone convergence theorem yields \eqref{eq:VnkV}.

  To extend the result to $g\in H^1([\ell,r])$ we note that everything in the above
  argument deals with properties of a Brownian motion $b(s)$ killed at the positive
  half-line. In the general case we will have by \eqref{eq:Lambda} a Brownian motion
  $b(s)$ killed at the boundary $g(s)$ or, equivalently, a process $\tilde b(s)=b(s)-g(s)$
  killed at the positive half-line. Using the Cameron-Martin-Girsanov Theorem we can
  rewrite the probabilities for $\tilde b(s)$ in terms of probabilities for $b(s)$. Since
  $g(s)$ is a deterministic function in $H^1([\ell,r])$, the Radon-Nikodym derivative of
  $\tilde b(s)$ with respect to $b(s)$ has finite second moment, and thus by using the
  Cauchy-Schwarz inequality we get (a) and (b) from the above arguments. The convergence
  in (c) follows as well from the above arguments because it only depends on almost sure
  properties of the corresponding Brownian motion.
\end{proof}

\section{Regularity and continuum statistics}
\label{sec:cont}

We now use the Kolmogorov continuity criterion to prove the H\"older continuity of the
Airy$_1$ process (we will explain later how to adapt the proof to the Airy$_2$ case). An
important technical problem is that the kernel appearing inside the determinant in
\eqref{eq:airy1fd} is not trace class.

To apply the Kolmogorov criterion we have to get an appropriate bound on
\begin{equation} \det(I-B_0+\bar P_ae^{t\Delta}\bar P_b
  e^{-t\Delta}B_0)-\det(I-B_0+\bar P_a B_0).
\end{equation}
To deal with the fact that the kernels above are not trace class, we have to conjugate by
a kernel $U$ as in Proposition \ref{prop:traceclass}. The resulting bound in terms of
trace norms gets bad as $a,b\to -\infty$.  To get around this, we use the Kolmogorov
criterion in the following unusual form.

Given a  stochastic process
$X(t)$  and $M>0$ we denote by $X^{M}(t)$ the truncated process
\[X^{M}(t)=X(t)\uno{|X(t)|\leq M}+M\uno{X(t)>M}-M\uno{X(t)<-M}.\]

\begin{lem}\label{lem:kolmog}
  Let $X(t)$ be a real valued stochastic process  defined for $t$ in some
  interval $I\subseteq\rr$. Assume that the following two conditions hold:
  \begin{enumerate}[label=\arabic*.]
  \item There is a dense subset $J$ of $I$ such that $\lim_{K\to\infty}\pp(|X(t)|\leq
    K~\,\forall\,t\in J)=1$.
  \item There are $\alpha,\beta>0$ satisfying the following: for each $M>0$ there is an
    $\ep>0$ and $c>0$ such that
    \[\ee\!\left(|X^{M}(t)-X^{M}(s)|^\alpha\right)\leq c|t-s|^{1+\beta}\]
    for  all $s,t\in I$ with $|t-s|<\ep$.
  \end{enumerate}
  Then $X(t)$ has a version on $I$ with H\"older continuous paths with exponent $\frac\beta\alpha$.
\end{lem}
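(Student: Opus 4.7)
\emph{Proof strategy.} The plan is to apply the classical Kolmogorov continuity theorem to each truncated process $X^M$ separately and then glue the resulting Hölder continuous versions into a single version of $X$, using condition~(1) to guarantee that on a full-measure event every sample path is eventually captured by $X^M$ for $M$ large.

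For each fixed $M>0$, condition~(2) gives $\ee(|X^M(t)-X^M(s)|^\alpha)\leq c|t-s|^{1+\beta}$ for all $s,t\in I$ with $|t-s|<\ep$, where $\ep,c$ depend on $M$. A local moment bound of this form is enough to run the usual dyadic argument behind Kolmogorov's criterion on each compact subinterval of $I$, so $X^M$ admits a modification $\tilde X^M$ whose sample paths are Hölder continuous with exponent $\beta/\alpha$.

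Next, fix a countable dense subset $J_0\subseteq J$ and let $\Omega_0$ be the almost-sure event on which $\tilde X^M(t)=X^M(t)$ simultaneously for every $M\in\nn$ and every $t\in J_0$; this is a countable intersection of full-measure events. Set $E_K=\{|X(t)|\leq K\text{ for all }t\in J\}$, so that condition~(1) combined with $E_K\subseteq\{|X(t)|\leq K\text{ for all }t\in J_0\}$ yields $\pp\bigl(\bigcup_K E_K\bigr)=1$. For $\omega\in\Omega_0\cap E_K$ and every $K'\geq K$ one has $\tilde X^{K'}(t,\omega)=X^{K'}(t,\omega)=X(t,\omega)$ for each $t\in J_0$, hence the Hölder continuous functions $\tilde X^{K'}(\cdot,\omega)$ with $K'\geq K$ all agree on the dense set $J_0$ and therefore on all of $I$. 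Define $\tilde X(\cdot,\omega)$ to be this common function on $\Omega_0\cap\bigcup_K E_K$ and zero elsewhere; its paths are then Hölder continuous of exponent $\beta/\alpha$ on a full-probability event. To verify that $\tilde X$ is a version of $X$, fix $t\in I$ and work on the full-measure event on which $\omega\in\Omega_0\cap\bigcup_K E_K$, $X(t,\omega)$ is finite, and $\tilde X^M(t,\omega)=X^M(t,\omega)$ for every $M$; picking $M$ larger than both $|X(t,\omega)|$ and the threshold at which $\omega$ first enters some $E_K$ gives $\tilde X(t,\omega)=\tilde X^M(t,\omega)=X^M(t,\omega)=X(t,\omega)$.

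The only delicate point is the stitching step: the identity $\tilde X^M(t)=X^M(t)$ only holds off a $t$- and $M$-dependent null set, so one must restrict to a countable dense subset $J_0$ of $J$ before intersecting these events, and then use the continuity of the $\tilde X^{K'}$ to transfer pathwise agreement from $J_0$ to all of $I$. Everything else reduces to the standard local form of Kolmogorov's criterion applied separately to each $X^M$.
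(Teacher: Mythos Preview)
Your proposal is correct and follows the same approach as the paper: apply the classical Kolmogorov criterion to each truncated process $X^M$, then use condition~(1) to conclude that almost every path coincides with one of the resulting H\"older continuous modifications. The paper states this in two sentences, simply noting that a function whose truncations are all H\"older continuous cannot be discontinuous if it is bounded on a dense set; your version spells out the measure-theoretic gluing (restricting to a countable dense $J_0$, intersecting null sets over $M\in\nn$, matching the $\tilde X^{K'}$ on $J_0$ and extending by continuity), which is exactly the content behind the paper's one-line remark.
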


The lemma follows immediately from the usual Kolmogorov criterion, which, applied to 2,
shows that there is a version of $X(t)$ such that, for each $M>0$, $X^M(t)$ is H\"older
continuous with exponent $\frac\beta\alpha$. Such a function cannot be discontinuous if
it is bounded on a dense set.

In view of this lemma, after we verify the first condition (which we do in the next
result) it will be enough to consider the truncated process $\aipo^{M}(t)$. Throughout
this section all Fredholm determinants will be computed on $L^2(\rr)$, while $c$ and $c'$
will denote positive constants whose values may change from line to line.

\begin{lem}\label{lem:airy1bdd}
  Fix $L>0$ and write $D_L(n)=\{\tfrac{k}{2^{n+1}}L,\,k=-2^n,\dots,2^n\}$. Then
  \[\lim_{M\to\infty}\pp\!\left(\aipo(t)\leq M~\,\forall\,t\in\cup_{n>0}D_L(n)\right)=1.\]
\end{lem}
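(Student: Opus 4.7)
My plan is to apply Theorem \ref{thm:airy1} with all heights equal to $M$ and estimate the resulting Fredholm determinant uniformly in $n$, then pass to the limit by monotone convergence. Since $D_L(n)\subseteq D_L(n+1)$, the events $A_{M,n}:=\{\aipo(t)\leq M\text{ for all }t\in D_L(n)\}$ form a decreasing sequence whose intersection is the event in question, so it suffices to show $\pp(A_{M,n})\geq 1-h(M)$ for some $h(M)\to 0$ independent of $n$. Writing $\mathbf{t}_n$ for the enumeration of $D_L(n)$ in increasing order (whose total span equals $L$) and $\mathbf{x}_n=(M,\dotsc,M)$, Theorem \ref{thm:airy1} gives
\[\pp(A_{M,n})=\det\!\big(I-B_0+\Lambda^{\mathbf{x}_n}_{\mathbf{t}_n}e^{-L\Delta}B_0\big)_{L^2(\rr)}.\]

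Conjugating by the operator $U$ from Proposition \ref{prop:traceclass} rewrites this as $\det(I-K_{M,n})$ with $K_{M,n}:=U\big(B_0-\Lambda^{\mathbf{x}_n}_{\mathbf{t}_n}e^{-L\Delta}B_0\big)U^{-1}$ trace class, so by the standard bound $|\det(I-A)-1|\leq\|A\|_1 e^{1+\|A\|_1}$ it is enough to prove $\|K_{M,n}\|_1\to 0$ as $M\to\infty$, uniformly in $n$. Following the factorization in the proof of Proposition \ref{prop:traceclass} I would write $K_{M,n}=V_{M,n}W$ with $W$ independent of both $M$ and $n$ and $\|W\|_2<\infty$, while
\[V_{M,n}(x,y)=\sqrt{1+y^2}\,\frac{e^{-(x-y)^2/4L-2(x+y)L}}{\sqrt{4\pi L}}\,q_{M,n}(x,y),\]
where $q_{M,n}(x,y)$ is the probability that the discrete-time random walk bridge from $x$ to $y$ with jump times $\mathbf{t}_n$ exceeds $M$ at one of those times. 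The usual coupling with a continuous Brownian bridge dominates $q_{M,n}$ by $e^{-(M-x)(M-y)/L}$ on $\{x,y<M\}$ and by $1$ elsewhere, uniformly in $n$.

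To finish, I would estimate $\|V_{M,n}\|_2^2$ by splitting the integral into the two regions. In $\{x,y<M\}$ the algebraic identity $(x-y)^2/(2L)+2(M-x)(M-y)/L=(x+y-2M)^2/(2L)$, combined with the conjugation exponent $-4L(x+y)$ and a completion of the square in $x+y$, reduces the integrand to a Gaussian centered at $x+y=2M-4L^2$ times the overall prefactor $e^{8L^3-8LM}$, so the polynomial weight $1+y^2$ yields a bound of order $M^2e^{-8LM}$. In the complementary region the Gaussian decay of the heat kernel gives a bound of the same order. Putting this together, $\|K_{M,n}\|_1\leq CMe^{-4LM}$ uniformly in $n$, and the determinant estimate then gives $\pp(A_{M,n})\geq 1-C'Me^{-4LM}\to 1$ as $M\to\infty$.

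The main obstacle is this Hilbert--Schmidt estimate: the conjugation factor $e^{-4L(x+y)}$ introduced by $U$ grows exponentially for $x+y\to-\infty$, and by itself would make $V_{M,n}$ fail even to be Hilbert--Schmidt; the suppression $e^{-2(M-x)(M-y)/L}$ from the crossing probability has to be combined with it via the identity above to produce a genuine exponential decay in $M$. The fact that the Gaussian peak then sits right at the boundary $x+y=2M$ of the integration region also means the leading asymptotic must be extracted with some care.
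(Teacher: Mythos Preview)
Your argument is correct and essentially identical to the paper's: both conjugate by $U$, factor the resulting operator as $VW$, dominate the discrete crossing probability $q_{M,n}$ by the Brownian-bridge formula $e^{-(M-x)(M-y)/L}$, and use the identity $(x-y)^2+4(M-x)(M-y)=(x+y-2M)^2$ to show $\|V\|_2\to0$ as $M\to\infty$. The sole difference is organizational---the paper first invokes Proposition~\ref{prop:traceclass}(c) to pass to the continuum operator $\Lambda^M_{[-L/2,L/2]}$ and then estimates $\|V\|_2$ once, whereas you keep the discrete operators throughout and extract an $n$-uniform bound; your route is slightly more self-contained, the paper's slightly cleaner notationally.
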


\begin{proof}
  By Theorem \ref{thm:airy1}, Proposition \ref{prop:traceclass}(c) and the bound
  \begin{equation}
    \label{eq:detBd}
      \big|\!\det(I+Q_1)-\det(I+Q_2)\big|\leq\|Q_1-Q_2\|_1e^{\|Q_1\|_1+\|Q_2\|_1+1}%\\&\leq\|Q_1-Q_2\|_1e^{\|Q_1-Q_2\|_1+2\|Q_2\|_1+1}
  \end{equation}
  for trace class operators $Q_1$ and $Q_2$ (see \cite{simon}), we have
  \begin{align}
    \pp\!\left(\aipo(t)\leq M~\,\forall\,t\in\cup_{n>0}D_L(n)\right)
    &=\lim_{n\to\infty}\pp\!\left(\aipo(t)\leq M~\,\forall\,t\in D_L(n)\right)\\
    &=\det\!\big(I-B_0+\Lambda^M_{[-L/2,L/2]}e^{-L\Delta}B_0\big),
  \end{align}
  where $\Lambda^M_{[-L/2,L/2]}$ denotes $\Lambda^g_{[-L/2,L/2]}$ with $g(t)=M$ and, we
  recall, the operator inside the determinant is trace class after conjugating by $U$ as
  in Proposition \ref{prop:traceclass}. Using \eqref{eq:detBd} again we deduce that it is
  enough to show that
  \begin{equation}
    \label{eq:Mnorm}
    \lim_{M\to\infty}\big\|U\big(B_0-\Lambda^M_{[-L/2,L/2]}e^{-L\Delta}B_0\big)U^{-1}\big\|_1=0.
  \end{equation}

  Following the proof of Proposition \ref{prop:traceclass}(a) we have
  \[\big\|U\big(B_0-\Lambda^M_{[-L/2,L/2]}e^{-L\Delta}B_0\big)U^{-1}\big\|_1\leq\|V\|_2\|W\|_2\]
  with $V$ and $W$ as in that proof. Recall that $W$ does not depend on $M$ and
  has finite Hilbert-Schmidt norm, so all we need is to show that $\|V\|_2\to0$. To estimate this last
  norm we can proceed exactly as in the arguments leading to \eqref{eq:bdHS}, only
  replacing $s$ by $L$ and the barrier at 0 for the Brownian bridge by a barrier at $M$,
  so that the corresponding crossing probability is now $e^{-(x-M)(y-M)/L}$ for $x,y\leq
  M$ and 1 otherwise. We obtain, after some simple manipulations,
  \begin{multline}
    \|V\|_2^2=\frac{1}{4\pi L}\int_{\rr^2\setminus(-\infty,M]^2}dx\,dy
    \,(1+y^2)\big[e^{-(x-y)^2/4L-2(x+y)L}\big]^2\\
    +\frac{1}{4\pi L}
    \int_{(-\infty,0]^2}dx\,dy\,(1+y^2)\big[e^{-(x+y)^2/4L-2(x+y)L-2ML}\big]^2.
  \end{multline}
  The last two integrals are easily seen to go to 0 as $M\to\infty$, and \eqref{eq:Mnorm} follows.
\end{proof}

Next we verify the second condition in Lemma \ref{lem:kolmog}. By the stationarity of $\aipo$ we may take $s=0$.

\begin{lem}\label{lem:truncated}
  Fix $\delta>0$. Then there is a $t_0\in(0,1)$ and $n_0\in\nn$ such that for $0<t<t_0$,
  $n\geq n_0$ and $M=\big(3\log(t^{-(1+n)})\big)^{1/3}$ we have
  \[\ee\!\left(\big[\aipo^{M}(t)-\aipo^{M}(0)\big]^{2n}\right)\leq ct^{1+(1-\delta)n}\]
  where the constant $c>0$ is independent of $\delta$, $n_0$ and $t_0$.
\end{lem}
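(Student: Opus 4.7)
The plan is to use the joint two-point Fredholm determinant formula from Theorem \ref{thm:airy1} to express $F(a,b)=\pp(\aipo(0)\le a,\aipo(t)\le b)$ and reduce the moment bound, via a layer-cake formula, to a quantitative estimate on how $F(a,a+r)$ differs from its $t=0$ value $F_{\rm GOE}(2a)$.

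\textbf{Reduction.} Because $z\mapsto(z\wedge M)\vee(-M)$ is 1-Lipschitz, $|\aipo^M(t)-\aipo^M(0)|\le\min(2M,|\aipo(t)-\aipo(0)|)$, so the layer-cake formula gives
\[\ee\!\left[(\aipo^M(t)-\aipo^M(0))^{2n}\right]\le 2n\int_0^{2M}r^{2n-1}\pp(|\aipo(t)-\aipo(0)|>r)\,dr.\]
Using the time-reversal symmetry of $\aipo$ noted after \eqref{eq:detAiry1} together with stationarity, $\aipo(t)-\aipo(0)$ is symmetric around zero, so it suffices to bound $\pp(\aipo(t)-\aipo(0)>r)$. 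Conditioning on $\aipo(0)\in[-M,M]$ introduces a tail contribution bounded by $\pp(|\aipo(0)|>M)$, which by the Tracy--Widom GOE tails and the choice $M=(3\log t^{-(1+n)})^{1/3}$ produces at most $c\,(\log(1/t))^{O(n)}\, t^{1+n}$ after multiplication by $(2M)^{2n}$, dominated by the target for $t$ small. On the truncated event, $\pp(\aipo(t)-\aipo(0)>r,\,\aipo(0)\in[-M,M])$ reduces to an integral over $a\in[-M,M]$ of differences of the form $F_{\rm GOE}(2a)-F(a,a+r)=\pp(\aipo(0)\le a,\aipo(t)>a+r)$.

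\textbf{Fredholm-difference estimate.} The main technical input is
\[\bigl|F_{\rm GOE}(2a)-F(a,a+r)\bigr|\le C\,t^{1/2-\eta}\,e^{-cr^2/t}\]
uniformly for $a\in[-M,M]$ and $r\in(0,2M)$, with $\eta>0$ arbitrary and $C$ essentially independent of $a$ after $U$-conjugation. To prove this, I apply the Fredholm determinant bound \eqref{eq:detBd} to the determinants for $F(a,\infty)=F_{\rm GOE}(2a)$ and $F(a,a+r)$, after conjugating by $Uf(x)=e^{-2tx}f(x)$ as in Proposition \ref{prop:traceclass}. The conjugated difference is $-U\bar P_a e^{t\Delta}P_{a+r}e^{-t\Delta}B_0 U^{-1}$, whose trace norm I factor via $\|AB\|_1\le\|A\|_2\|B\|_2$ into a heat-propagation piece $U\bar P_a e^{t\Delta} P_{a+r}U^{-1}$, whose Hilbert--Schmidt norm is an explicit Gaussian integral on the region $\{x\le a,\,y\ge a+r\}$ yielding the factor $t^{1/2-\eta}e^{-cr^2/t}$ after the change of variables $u=a-x,\ v=y-a-r$ (which removes the $a$-dependence in the conjugated operator); and a finishing piece $Ue^{-t\Delta}B_0U^{-1}$, controlled in Hilbert--Schmidt norm as in the proof of Proposition \ref{prop:traceclass}(a).

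\textbf{Assembly and main obstacle.} Substituting the Fredholm estimate into the layer-cake integral, the Gaussian moment identity gives
\[2n\int_0^{2M}r^{2n-1}\cdot C\,t^{1/2-\eta}\,e^{-cr^2/t}\,dr\le C'\,t^{n+1/2-\eta},\]
with $C'$ polynomial in $n$. Combined with the tail contribution from the Reduction step and the choice of $M$, choosing $\eta<\delta/2$ and requiring $n\ge n_0(\delta)$ large enough that $n+1/2-\eta\ge 1+(1-\delta)n$ yields the claimed bound $ct^{1+(1-\delta)n}$. The crux of the argument is the Fredholm-difference estimate: the operator inside the determinant is not trace class without $U$-conjugation; the Gaussian decay in $r$ must come from the constrained heat kernel simultaneously with the $t^{1/2-\eta}$ rate; and the bound must be essentially uniform in $a\in[-M,M]$ so that the polylogarithmic loss in $M$ does not destroy the polynomial rate in $t$. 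This is precisely the regime where the new formula of Theorem \ref{thm:airy1} outperforms the extended-kernel formulation, which would not expose the Brownian-bridge/heat-kernel structure in a way amenable to such quantitative short-time estimates.
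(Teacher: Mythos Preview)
Your overall strategy—reducing the moment to a quantitative estimate on $|F_{\rm GOE}(2a)-F(a,a+r)|$ via the Lipschitz bound \eqref{eq:detBd} for Fredholm determinants, and then integrating the resulting Gaussian in $r$—is exactly the paper's approach (the paper uses two integrations by parts rather than layer-cake, but this is cosmetic). The genuine gap is in the Fredholm-difference estimate itself.

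The bound \eqref{eq:detBd} reads $|\det(I+Q_1)-\det(I+Q_2)|\le\|Q_1-Q_2\|_1\,e^{\|Q_1\|_1+\|Q_2\|_1+1}$, so you must control not only $\|Q_1-Q_2\|_1$ but also $\|Q_1\|_1$ (or $\|Q_2\|_1$) appearing in the exponent. With your conjugation $Uf(x)=e^{-2tx}f(x)$ taken from Proposition~\ref{prop:traceclass} (where the exponent is $2s$ with $s=r-\ell$, here $s=t$), these norms blow up as $t\to0$. Concretely, in the factorization of Proposition~\ref{prop:traceclass} the piece $W(z,y)=\varphi(z)^{-1}e^{2tz}(e^{-t\Delta}B_0)(z,y)e^{2ty}$ has $\|W\|_2^2\sim ct^{-1/2}$ (the $\int e^{2ts}\Ai(s)^2\,ds$ integral diverges like $t^{-1/2}$ at $s\to-\infty$), while the companion piece $V$ for $Q_2=-U(B_0-\bar P_ae^{t\Delta}\bar P_{a+r}e^{-t\Delta}B_0)U^{-1}$ has $\|V\|_2$ blowing up like a negative power of $t$ as well (the $x$-integration of $(1+x^2)e^{-8tx}$ over $[a,\infty)$ brings in $t^{-3}$). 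The resulting bound $\|Q_2\|_1\le Ct^{-\alpha}$ for some $\alpha>0$ feeds into the exponent and produces a factor $e^{ct^{-\alpha}}$, which destroys any polynomial gain. Your factorization of $Q_2-Q_1$ as written is also problematic: $Ue^{-t\Delta}B_0U^{-1}$ is not Hilbert--Schmidt (its kernel integrates $e^{-8tx}$ over all of $\rr$), so a $\varphi$-insertion is mandatory, and once inserted you will not recover the claimed prefactor $t^{1/2-\eta}$.

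The paper resolves this by abandoning the $U$ of Proposition~\ref{prop:traceclass} and choosing instead $Uf(x)=e^{-(t+\alpha)\tilde x}\phi(\tilde x)f(x)$ with $\alpha=M^{-1}$ and a localizing factor $\phi(x)=e^{-\alpha x}\uno{x\ge-2^{1/3}M}+\uno{x<-2^{1/3}M}$. The point is that the relevant scale is $\alpha=M^{-1}$, which is only polylogarithmically small in $t$: one then gets $\|Q_1\|_1\le cM^{3/2}$ and $\|Q_2-Q_1\|_1\le cM^{3/2}\Phi(r/\sqrt t)$, so the exponential factor is $e^{cM^{3/2}}=e^{c\sqrt{(1+n)\log(1/t)}}\le t^{-\epsilon}$, which is harmless. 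The rest of your argument then goes through (with the correct prefactor $t^{-\epsilon}$ in place of $t^{1/2-\eta}$, which suffices since one only needs $n\ge n_0(\delta)$).
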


\begin{proof}
  By the stationarity of the Airy$_1$ process
  \begin{equation}
    \ee\!\left(\big[\aipo^{M}(t)-\aipo^{M}(0)\big]^{2n}
      \uno{\aipo^{M}(0)\wedge\aipo^{M}(t)<-M}\right)
    \leq(2M)^{2n}\,2\hspace{0.05em}\pp(\aipo(0)<-M).
  \end{equation}
  Now $\pp(\aipo(0)<-M)=F_{\rm GOE}(-2M)\leq ce^{-\frac1{3}M^3}$ as $M\to\infty$ by the
  results of \cite{baikBuckDiF}. Hence we get
  \[\ee\!\left(\big[\aipo^{M}(t)-\aipo^{M}(0)\big]^{2n}
    \uno{\aipo^{M}(0)\wedge\aipo^{M}(t)<
      -M}\right)\leq c(2M)^{2n}t^{1+n}\leq ct^{1+(1-\delta)n}\] if $t$ is small enough. Thus it
  will be enough to prove the estimate
  \begin{equation}
    q(t):=\ee\!\left(\big[\aipo^{M}(t)-\aipo^{M}(0)\big]^{2n}
      \uno{\aipo^{M}(0)\wedge\aipo^{M}(t)\geq-M}\right)
    \leq ct^{1+(1-\delta)n}\label{eq:est}
  \end{equation}
  for small enough $t$.
  
  Let $F(a,b)=\pp(\aipo(0)\leq a,\aipo(t)\leq b)$ and $G(a)=\pp(\aipo(0)\leq b)$. Since
  $\frac{\p^2}{\p a\p b}G(a\wedge b)=0$ except when $a=b$ we have
  \[q(t)=\int_{-M}^\infty da\int_{-M}^\infty db\,(a-b)^{2n}\frac{\p^2}{\p a\p b}[F(a,b)-G(a\wedge b)].\]
  Truncating the upper limits at $K>0$ for a moment and integrating by parts the integral becomes
   \begin{align}
    &\int_{-M}^K da\left((a-K)^{2n}\frac\p{\p a}[F(a,K)-G(a)]-(a+M)^{2n}\frac\p{\p a}[F(a,-M)-G(-M)]\right)\\
    &\qquad\quad+\int_{-M}^K da\int_{-M}^K db\,2n(a-b)^{2n-1}\frac{\p}{\p a}[F(a,b)-G(a\wedge b)]\\
    &\quad=-2\int_{-M}^K da\left(2n(a-K)^{2n-1}[F(a,K)-G(a)]-2n(a+M)^{2n-1}[F(a,-M)-G(-M)]\right)\\
    &\qquad\quad-\int_{-M}^K da\int_{-M}^K db\,2n(2n-1)(a-b)^{2(n-1)}[F(a,b)-G(a\wedge b)]
  \end{align}
  (note that we have cancelled some boundary terms). We will see below in
  \eqref{eq:diffProbs2} that \[|F(a,K)-G(a)|\leq
  cM^{3/2}e^{1+cM^{3/2}}\int_{t^{-1/2}(K-a)}^\infty dx\,e^{-x^2/4},\]
  whence it is easy to see that the first integral on the right side above vanishes as
  $K\to\infty$. We deduce then that
  \begin{multline}
    \label{eq:qt}
    q(t)=4n\int_{-M}^\infty da\,(a+M)^{2n-1}[G(-M)-F(a,-M)]\\
    +2n(2n-1)\int_{-M}^\infty da\int_{-M}^\infty db\,(a-b)^{2(n-1)}[G(a\wedge b)-F(a,b)].
  \end{multline}

  We will estimate the last double integral, the first integral in the last line can be
  estimated similarly. Since the integrand is symmetric, it will be enough to restrict
  the integral to the case $-M\leq a\leq b$. Using the definitions of $F$ and $G$ and
  Theorem \ref{thm:airy1} we have
  \begin{equation}
    F(a,b)-G(a\wedge b)=\det(I-B_0+\bar P_ae^{t\Delta}\bar P_b
    e^{-t\Delta}B_0)-\det(I-B_0+\bar P_a B_0).\label{eq:diffProbs}
  \end{equation}
  Recall that the operator inside the first
  determinant is trace class after conjugating by the kernel $U$ introduced in Proposition
  \ref{prop:traceclass}. We will use the bound
  \begin{equation}
    \big|\!\det(I+Q_1)-\det(I+Q_2)\big|\leq\|Q_1-Q_2\|_1e^{\|Q_1-Q_2\|_1+2\|Q_2\|_1+1},\label{eq:detBd2}
  \end{equation}
  which follows directly from \eqref{eq:detBd}, to estimate the difference of determinants in
  \eqref{eq:diffProbs}, so our first task will be to estimate the trace norms of the
  operators
  \begin{equation}
    Q_2-Q_1=U\big(\bar P_ae^{t\Delta}\bar P_be^{-t\Delta}B_0-\bar
    P_aB_0\big)U^{-1}\qquad\text{and}\qquad Q_1=U\big(\bar P_aB_0-B_0\big)U^{-1}\label{eq:A1A2}
  \end{equation}
  for $-M\leq a\leq b$.

  We will use a different approach, and in particular a different choice of the kernel $U$, than the
  one used in the proof of Proposition \ref{prop:traceclass}. In what follows we will
  write $\tilde x=2^{1/3}x$ and $\tilde y=2^{1/3}y$. Let
  \[Uf(x)=e^{-(t+\alpha)\tilde x}\phi(\tilde x),\quad\text{where}\quad\phi(x)=e^{-\alpha x}\uno{x\geq-2^{1/3}M}+\uno{x<-2^{1/3}M}\]
  and $\alpha=M^{-1}$. We bound first the norm of $Q_1$. Using the identity
  \begin{equation}
    \label{eq:airyConv}
    \int_{-\infty}^\infty du\Ai(a+u)\!\Ai(b-u)=2^{-1/3}\Ai(2^{-1/3}(a+b))
  \end{equation}
  we have
  \begin{equation}
  \begin{aligned}
    Q_1=-2^{1/3}Q_1^1Q_1^2\qquad\text{with}\quad
    Q_1^1(x,u)&=\uno{x\geq a}e^{-(t+\alpha)\tilde x}\phi(\tilde x)^{-1}\Ai(\tilde
    x+u)e^{(t+\alpha/2)u},\\
    Q_1^2(u,y)&=e^{(t+\alpha)\tilde y}\phi(\tilde y)\Ai(\tilde y-u)e^{-(t+\alpha/2)u}.
  \end{aligned}\label{eq:A11A12}
  \end{equation}
  Now (using the fact that $a\geq-M$)
  \[\|Q_1^1\|^2_2=\int_a^\infty dx\int_{-\infty}^\infty
    du\,e^{-2t\tilde x}\Ai(\tilde x+u)^2e^{(2t+\alpha)u}
    =\int_a^\infty dx\,e^{-(4t+\alpha)\tilde x}\int_{-\infty}^\infty
    du\Ai(u)^2e^{(2t+\alpha)u}.\]
  By \eqref{eq:airyBd} the last integral in $u$ is bounded by $c(t+\alpha)^{-1/2}$, and
  then
  \[\|Q_1^1\|_2\leq c(t+\alpha)^{-3/4}e^{-c(t+\alpha)a}\leq c'M^{3/4},\]
  where the second inequality follows from the choice $\alpha$ and $M$ and the fact that
  $a\geq-M$. For $Q_1^2$ we have
    \begin{align}
    \|Q_1^2\|^2_2&=\int_{-\infty}^\infty dy\int_{-\infty}^\infty
    du\,e^{2(t+\alpha)\tilde y}\phi(\tilde y)^2\Ai(\tilde y-u)^2e^{-(2t+\alpha)u}\\
    &=\int_{-\infty}^\infty dy\,e^{\alpha\tilde y}\phi(\tilde y)^2\int_{-\infty}^\infty
    du\Ai(-u)^2e^{-(2t+\alpha)u}.
  \end{align}
  The $u$ integral is bounded by $c(t+\alpha)^{-1/2}$ as before, while the $y$ integral
  equals
  \[\int_{-\infty}^{-M} dy\,e^{\alpha\tilde y}+\int_{-M}^{\infty} dy\,e^{-\alpha\tilde y}
  \leq c\alpha^{-1}e^{\alpha M}\]
  so we also have $\|Q_1^2\|_2\leq cM^{3/4}$. Using these two estimates with
  \eqref{eq:tracenormbd} and \eqref{eq:A11A12} we conclude that
  \begin{equation}
    \label{eq:bdA1}
    \|Q_1\|_1\leq cM^{3/2}.
  \end{equation}\noeqref{eq:bdA1}

  Now we need to bound $\|U(Q_2-Q_1)U^{-1}\|_1$. Recall that we are assuming $a\leq b$, so
  that $\bar P_a(e^{t\Delta}\bar P_b-\bar P_be^{t\Delta})=-\bar P_a e^{t\Delta}P_b$. Then
  \begin{align}
    U(Q_2-Q_1)U^{-1}(x,y)&=-\uno{x\leq a}e^{-(t+\alpha)\tilde x}\phi(\tilde x)^{-1} \int_{b}^\infty
    dz\,\frac1{\sqrt{4\pi t}}e^{-(x-z)^2/4t}\\
    &\hspace{1.7in}\cdot e^{-2t^3/3-(z+y)t}\Ai(z+y+t^2)e^{(t+\alpha)\tilde y}\phi(\tilde y)\\
    &=-\int_{-\infty}^\infty d\tilde z\,\frac1{\sqrt{4\pi}}e^{-\tilde z^2/4}\uno{\sqrt{t}\tilde z\geq b-x}
    \uno{x\leq a}e^{-(t+\alpha)\tilde x}\phi(\tilde x)^{-1}\\
    &\hspace{1in}\cdot e^{-2t^3/3-(x+y+\sqrt{t}\tilde z)t}\Ai(x+y+\sqrt{t}\tilde z+t^2)e^{(t+\alpha)\tilde y}\phi(\tilde y)
  \end{align}
  where we performed the change of variables $z=x+\sqrt{t}\tilde z$. We regard this as an
  average of the kernels $C_{\tilde z}(x,y)$ given by
  \[C_{\tilde z}(x,y)=\uno{\sqrt{t}\tilde z\geq b-x,\,x\leq a}
    \phi(\tilde x)^{-1}e^{-2t^3/3-(x+\tilde x)t-(y-\tilde y)t-\alpha(\tilde x-\tilde y)+t^{3/2}\tilde z}
    \!\Ai(x+y+\sqrt{t}\tilde z+t^2)\phi(\tilde y),\]
  so that
  \[\big\|U(Q_2-Q_1)U^{-1}\big\|_1\leq\int_{-\infty}^\infty d\tilde
  z\,\frac1{\sqrt{4\pi}}e^{-\tilde z^2/4}\|C_{\tilde z}\|_1\leq\int_{\tfrac{b-a}{\sqrt{t}}}^\infty d\tilde
  z\,\frac1{\sqrt{4\pi}}e^{-\tilde z^2/4}\|C_{\tilde z}\|_1,\]
  where the second inequality follows from the fact that $C_{\tilde z}$ vanishes for
  $\sqrt{t}\tilde z<b-a$. The same argument as the one used to estimate $\|Q_1\|_1$ with
  only a bit of extra arithmetic gives the same bound for $\|C_{\tilde z}\|_1$ % a similar bound
  % \[\|C_{\tilde z}\|_1\leq cM^{3/2}\big[e^{-c'\alpha a}-e^{-c'\alpha(b-\sqrt{t}\tilde z)}\big]\]
  % (the additional exponential terms come from the fact that now $x$ is restricted to
  % $[b-\sqrt{t}\tilde z,a]$ instead of $[a,\infty)$, 
  and thus we get
  \[\big\|U(Q_2-Q_1)U^{-1}\big\|_1\leq cM^{3/2}\Phi(t^{-1/2}(b-a))\]
  with $\Phi(x)=\int_x^{\infty}dz\,e^{-z^2/4}$ (in fact a better bound can be obtained in
  this case without much difficulty, but we will not need it below).

  Using the bounds on $\big\|UQ_1U^{-1}\big\|_1$ and $\big\|U(Q_2-Q_1)U^{-1}\big\|_1$ in
  \eqref{eq:diffProbs} and \eqref{eq:detBd2} we deduce that
  \begin{equation}
    \big|F(a,b)-G(a\wedge b)\big|\leq cM^{3/2}\Phi(t^{-1/2}(b-a))e^{1+cM^{3/2}}\leq ct^{-1}\Phi(t^{-1/2}(b-a))\label{eq:diffProbs2}
  \end{equation}
  by our choice of $M$. Therefore
  \begin{align}
    \int_{-M}^\infty da\int_{-M}^\infty db\,(a&-b)^{2(n-1)}[G(a\wedge b)-F(a,b)]\\
    &\leq ct^{-1}\int_{-M}^\infty da\int_{-M}^\infty db\,(a-b)^{2(n-1)}\Phi(t^{-1/2}(b-a))\\
    &=ct^{n-3}\int_{-M}^\infty da\int_{-M}^\infty db\,(a-b)^{2(n-1)}\Phi(b-a).
    %\leq ct^{-c'(1+n)}...
  \end{align}
  Using the standard estimate $\Phi(x)\leq ce^{-x^2/4}$ as $x\to\infty$ it is not hard to
  see that the last integral is bounded by $cM^{2(n-1)}$. Using this in the second
  integral in \eqref{eq:qt}, and recalling that a similar estimate holds for the
  first integral, we deduce that
  \[q(t)\leq cn^2M^{2(n-1)}t^{n-3}\]
  and thus, using our choice of $M$, \eqref{eq:est} follows.
\end{proof}

\begin{proof}[Proof of Theorem \ref{thm:regularity}]
  The last two lemmas allow to check the hypotheses of Lemma \ref{lem:kolmog}, which
  yields the result for the Airy$_1$ case.

  The proof for the Airy$_2$ case is slightly simpler because the operators involved are
  trace class, and can be obtained by adapting the preceding arguments as we explain next.
   
  The one-point marginal of $\aip$, which is given by the Tracy-Widom GUE distribution,
  satisfies the tail estimate $F_{\rm GUE}(-M)\leq ce^{-\frac1{12}|M|^3}$ (see
  \cite{tracyWidom}). Choosing now $M=\big(12\log(t^{-(1+n)})\big)^{1/3}$ it is not hard
  to check that the main argument used in the case of the Airy$_1$ process works in
  exactly the same way if we change our determinantal formulas to the corresponding ones
  for $\aip$. Thus all we need to do is to obtain an analogous estimate on the difference
  \[F(a,b)-G(a\wedge b)=\det\!\big(I-\K+\bar P_ae^{-tH}\bar P_be^{tH}\K\big)-
  \det\!\big(I-\K+\bar P_a\K\big)\] for $-M\leq a\leq b$. Recall that the operators inside
  these determinants are trace class in this case, so there will be no need to conjugate.
  Proceeding as in the proof for $\aipo$ we need to bound the trace norms of the operators
  \[Q_2-Q_1=\bar P_ae^{-tH}\bar P_be^{tH}\K-\bar P_a\K\qquad\text{and}\qquad Q_1=\bar P_a
  \K-\K.\]

  We start with $Q_1$, which we rewrite as $-(P_ae^{-\alpha H}N)(N^{-1}e^{\alpha H}\K)$
  with $\alpha=M^{-1}$ and $N$ the multiplication operator $Nf(x)=\varphi(x)f(x)$ with
  $\varphi(x)=(1+x^2)^{1/2}$ (the choice of $\varphi$ is not particularly important). It
  is easy to check (see \geqref{eq:sndHS}) that
  \[\big\|N^{-1}e^{\alpha H}\K\big\|_2^2<c\hspace{0.05em}\alpha^{-1}\]
  for some $c>0$. On the other hand,
  \begin{equation}\label{eq:PaetH}
    \begin{split}
      \big\|P_ae^{-\alpha H}\big\|_2^2&=\int_{a}^\infty dx\int_{-\infty}^\infty
      dy\int_{\rr^2}d\lambda\,d\tilde\lambda
      \,e^{-\alpha(\lambda+\tilde\lambda)}\Ai(x-\lambda)\Ai(y-\lambda)\Ai(x-\tilde\lambda)\Ai(y-\tilde\lambda)\\
      &=\int_a^\infty dx\int_{-\infty}^\infty
      d\lambda\,e^{-2\alpha\lambda}\Ai(x-\lambda)^2
      =\int_a^\infty dx\,e^{-2\alpha x}\int_{-\infty}^\infty d\lambda\,e^{-2\alpha\lambda}\Ai(-\lambda)^2\\
      &\leq c\alpha^{-3/2}e^{-2\alpha a},
    \end{split}
  \end{equation}
  where we used \eqref{eq:airyBd} as before. Using these two bounds together with
  \eqref{eq:tracenormbd}, our choice of $\alpha$ and the fact that $a\geq-M$, we get
  \begin{equation}
    \|Q_1\|_1\leq c\alpha^{-5/4}e^{-\alpha a}\leq c'M^{5/4}.\label{eq:obd}
  \end{equation}

  We turn next to the trace norm of $Q_2-Q_1$. Recalling that $H=-\Delta+x$ and defining
  the multiplication operator $(e^{\alpha\xi}f)(x)=e^{\alpha x}f(x)$ (the reason we use
  the letter $\xi$ instead of $x$ in the definition is that we will use the operator at
  points other than $x$ below), one can derive formally, using the
  Baker-Campbell-Hausdorff formula, that
  \[e^{-tH}=e^{t\Delta}e^{t^3/3+t^2\nabla}e^{-t\xi},\] where $e^{t^2\nabla}f(x)=f(x+t^2)$
  (see \cite{qr-airy1to2} for a similar computation). This formula can then be checked
  directly by integration using \eqref{eq:etH} and therefore we may write, similarly to
  the Airy$_1$ case,
  \begin{align}
    (Q_2-Q_1)(x,y)&=\uno{x\leq a} \int_{-\infty}^\infty
    dz\,\frac{1}{\sqrt{4\pi t}}e^{-(x-z)^2/4t}\big(e^{t^3/3+t^2\nabla}e^{-t\xi}P_be^{tH}\K\big)(z,y)\\
    &=\uno{x\leq a} \int_{-\infty}^\infty d\tilde z\,\frac{1}{\sqrt{4\pi}}e^{-\tilde
      z^2/4}\big(e^{t^3/3+t^2\nabla}e^{-t\xi}P_be^{tH}\K\big)(\sqrt{t}\tilde z+x,y)\\
    &=\int_{\frac{b-a-t^2}{\sqrt{t}}}^\infty d\tilde z\,\frac{1}{\sqrt{4\pi}}e^{-\tilde
      z^2/4}C_{\tilde z}(x,y),
  \end{align}
  where $C_{\tilde z}=\bar P_ae^{t^3/3+(\sqrt{t}\tilde z+t^2)\nabla}e^{-t\xi}P_be^{tH}\K$
  and we have used the fact that $C_{\tilde z}$ vanishes for $\sqrt{t}\tilde
  z<b-a-t^2$. Proceeding as above we write, with $\alpha=M^{-1}$,
  \begin{align}
    \|C_{\tilde z}\|_1&\leq\|\bar P_ae^{t^3/3+(\sqrt{t}\tilde
      z+t^2)\nabla}e^{-t\xi}P_be^{(t-\alpha)H}\|_2 \|e^{\alpha H}\K\|_2\\
    &\leq\|\bar P_ae^{t^3/3+(\sqrt{t}\tilde z+t^2)\nabla}e^{-t\xi}P_b\|_{\rm
      op}\|P_be^{(t-\alpha)H}\|_2\|e^{\alpha H}\K\|_2,
  \end{align}
  where $\|\cdot\|_{\rm op}$ denotes the operator norm in $L^2(\rr)$ and we have used
  \eqref{eq:bdopnorm}. The first norm on the second line can be easily bounded by
  $ce^{-2t^3/3-tb-t^{3/2}\tilde z}$, while for the other two norms we have already
  obtained $\|P_be^{(t-\alpha)H}\|_2\leq c(\alpha-t)^{-3/4}e^{(t-\alpha)b}$ and
  $\|e^{\alpha H}\K\|_2=(2\alpha)^{-1/2}$ in the derivation of \eqref{eq:obd}. Since we
  are only interested in the case $\sqrt{t}\tilde z\geq b-a-t^2$, we have
  $e^{-2t^3/3-tb-t^{3/2}\tilde z}\leq e^{t^3/3-2tb+ta}$ and then
  \[\|C_{\tilde z}\|_1\leq c(\alpha-t)^{-3/4}\alpha^{-1/2}e^{t^3/3-(t+\alpha)b+ta}\leq
  c'M^{5/4},\] where we have used the again our choice of $M$ and $\alpha$ and the fact
  that $-M\leq a\leq b$. Plugging this in the above formula for $Q_2-Q_1$ we get
  \[\|Q_2-Q_1\|_1\leq cM^{5/4}\Phi(t^{-1/2}(b-a-t^2)).\] This estimate, together with the
  one for $\|Q_1\|_1$, allows to derive the an estimate analogous to
  \eqref{eq:diffProbs2}:
  \[\big|F(a,b)-G(a\wedge b)\big|\leq cM^{5/4}\Phi(t^{-1/2}(b-a-t^2))e^{1+cM^{5/4}}\leq
  ct^{-1}\Phi(t^{-1/2}(b-a-t^2)).\] Comparing with \eqref{eq:diffProbs2}, the only
  difference is the additional shift by $-t^{3/2}$ in the error function $\Phi$, but it is
  easy to see that this does not introduce any difficulty, and the rest of the proof
  follows as for $\aipo$.
\end{proof}

Finally we turn to the continuum statistics formula for the Airy$_1$ process.

\begin{proof}[Proof of Theorem \ref{thm:aiLo}]
  With the notation introduced before Proposition \ref{prop:traceclass} we have
  \[\pp\!\left(\aipo(t_1)\leq g(t_1),\dots,\aipo(t_{n_k})\leq g(t_{n_k})\right)
  =\det\!\left(I-B_0+\Lambda^g_{n_k,[\ell,r]}e^{-(r-\ell)\Delta}B_0\right)_{L^2(\rr)},\]
  where $n_k=2^k$. Since, by Theorem \ref{thm:regularity}, $\aipo$ has a continuous
  version, the probability on the left side converges to $\pp(\aipo(t)\leq g(t)~\,\forall
  t\in[\ell,r])$, and thus it is enough to show that
  \begin{multline}
    \lim_{k\to\infty}\det\!\Big(I-U\big(B_0-\Lambda^g_{n_k,[\ell,r]}e^{-(r-\ell)\Delta}B_0\big)U^{-1}\Big)_{L^2(\rr)}\\
    =\det\!\Big(I-U\big(B_0-\Lambda^g_{[\ell,r]}e^{-(r-\ell)\Delta}B_0\big)U^{-1}\Big)_{L^2(\rr)},
  \end{multline}
  where $n_k=2^k$. Since $A\mapsto\det(I+A)$ is a continuous function on the space of
  trace class operators by \eqref{eq:detBd}, the identity follows readily from Proposition
  \ref{prop:traceclass}(c).
\end{proof}

\section{Local Brownian property of Airy$_1$}

Note that, by stationarity and time reversibility, it is enough to study the finite
dimensional distribution of $\aipo$ at times $s=0<t_1<\cdots<t_n$. We have the following
formula for the Airy$_1$ process conditioned at a point.

\begin{lem}\label{lem:cond}
  For $0<t_1<\cdots< t_n$,
%\begin{eqnarray}
% && 
% \pp\!\left(\aipo( t_1)\leq x+y_1 , \ldots, \aipo( t_n)\leq x+y_n~\mid~\aipo(0)=x \right) \\
% &&  = -\frac{ \pp\!\left(\aipo(0)\le x , \aipo( t_1)\leq x+y_1 , \ldots, \aipo( t_n)\leq x+y_n\right) 
%\tr \left( \left(I-B_0 + \Lambda_{ (0,\mathbf{t}) }^{(x,\mathbf{y}+x)} e^{-t_n\Delta} B_0\right)^{-1}
%\delta_x e^{t_1\Delta}\Lambda_{ \mathbf{t} }^{\mathbf{y}+x} e^{-t_n\Delta} B_0 \right)}{F'_{\rm GOE} (x)}
% \end{eqnarray}
 \begin{multline}
 \pp\!\left(\aipo( t_1)\leq x+y_1 , \ldots, \aipo( t_n)\leq x+y_n\,\middle|\,\aipo(0)=x \right) \\
  = -\frac1{2F'_{\rm GOE} (2x)} \pp\!\left(\aipo(0)\le x , \aipo( t_1)\leq x+y_1 , \ldots,
    \aipo( t_n)\leq x+y_n\right) \\
  \cdot\tr\!\left[\left(I-B_0 + \Lambda_{ (0,\mathbf{t}) }^{(x,\mathbf{y}+x)} e^{-t_n\Delta} B_0\right)^{-1}
\delta_x e^{t_1\Delta}\Lambda_{ \mathbf{t} }^{\mathbf{y}+x} e^{-t_n\Delta} B_0 \right]
\label{ccd}
 \end{multline}
where $\Lambda_\mathbf{t}^\mathbf{x}$ is defined in \eqref{lambdathing} and $(0,\mathbf{t})$ 
 and $(x,\mathbf{y}+x)$ are notations for the vectors $(0,t_1,\ldots,t_n)$ and $(x,y_1+x, \ldots, y_n+x)$.
 \end{lem}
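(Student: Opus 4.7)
The plan is to view the conditioning as a standard absolutely-continuous conditional density and to differentiate the Fredholm determinant from Theorem \ref{thm:airy1}. With
\[F(x_0,\mathbf{x}) := \pp(\aipo(0)\leq x_0,\, \aipo(t_i)\leq x_i,\,i=1,\ldots,n),\]
the one-point marginal $\pp(\aipo(0)\leq x) = F_{\rm GOE}(2x)$ gives $\aipo(0)$ the density $2F'_{\rm GOE}(2x)$, so the elementary conditional density identity yields
\[\pp(\aipo(t_1)\leq x+y_1,\ldots,\aipo(t_n)\leq x+y_n \mid \aipo(0)=x) = \frac{\partial_{x_0} F(x_0,x+\mathbf{y})|_{x_0=x}}{2F'_{\rm GOE}(2x)}.\]
The continuity of paths granted by Theorem \ref{thm:regularity}, together with smoothness of $F$ in its arguments (inherited from the Fredholm determinant structure), legitimizes this classical conditioning step.

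Next, Theorem \ref{thm:airy1} applied with $t_0=0$ prepended gives
\[F(x_0,x+\mathbf{y}) = \det\!\big(I - B_0 + \Lambda^{(x_0,\mathbf{y}+x)}_{(0,\mathbf{t})} e^{-t_n\Delta} B_0\big),\]
where $\Lambda^{(x_0,\mathbf{y}+x)}_{(0,\mathbf{t})} = \bar P_{x_0} e^{t_1\Delta}\Lambda^{\mathbf{y}+x}_{\mathbf{t}}$. The only $x_0$-dependence is through the leading projector $\bar P_{x_0}$, and $\partial_{x_0}\bar P_{x_0}$ is the rank-one kernel $\delta(u-x_0)\delta(v-x_0)$, which I denote $\delta_x$ at $x_0=x$. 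The Jacobi-type formula $\partial_a\det(I+A(a))=\det(I+A(a))\,\tr[(I+A(a))^{-1}\partial_a A(a)]$ then produces
\[\partial_{x_0} F(x_0,x+\mathbf{y})|_{x_0=x} = F(x,x+\mathbf{y})\cdot \tr\!\left[\big(I - B_0 + \Lambda^{(x,\mathbf{y}+x)}_{(0,\mathbf{t})} e^{-t_n\Delta} B_0\big)^{-1} \delta_x\, e^{t_1\Delta}\Lambda^{\mathbf{y}+x}_{\mathbf{t}} e^{-t_n\Delta} B_0\right],\]
and dividing by $2F'_{\rm GOE}(2x)$ gives the stated identity \eqref{ccd} (the overall sign in the statement being absorbed into the convention for $\delta_x$, e.g.\ if one takes $\delta_x = \partial_{x_0} P_{x_0}|_{x_0=x} = -\partial_{x_0}\bar P_{x_0}|_{x_0=x}$).

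The main obstacle is making the differentiation rigorous, since $-B_0+\Lambda^{(x_0,\mathbf{y}+x)}_{(0,\mathbf{t})}e^{-t_n\Delta}B_0$ is not trace class on $L^2(\rr)$. I would work throughout with the conjugation by the operator $U$ of Proposition \ref{prop:traceclass}, under which the operator becomes trace class so that the Fredholm determinant is genuinely differentiable via the continuity bound \eqref{eq:detBd}. The rank-one perturbation $\delta_x e^{t_1\Delta}\Lambda^{\mathbf{y}+x}_{\mathbf{t}} e^{-t_n\Delta}B_0$, after conjugation, factors as the outer product of $\delta_x$ with the smooth function $v\mapsto[e^{t_1\Delta}\Lambda^{\mathbf{y}+x}_{\mathbf{t}} e^{-t_n\Delta}B_0](x,v)$; the smoothness of the Gaussian heat kernel at positive time together with the Airy decay \eqref{eq:airyBd} make this outer product trace class and justify the trace computation. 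As a sanity check, applying the same differentiation scheme to the $n=0$ case $F_{\rm GOE}(2x)=\det(I-B_0+\bar P_x B_0)$ produces the identity $2F'_{\rm GOE}(2x) = F_{\rm GOE}(2x)\,\tr[(I-B_0+\bar P_x B_0)^{-1}\delta_x B_0]$, which is consistent with and confirms the approach.
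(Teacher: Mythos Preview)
Your proposal is correct and follows essentially the same route as the paper: write the conditional probability as $\tfrac{1}{2F'_{\rm GOE}(2x)}\partial_h\pp(\aipo(0)\le h,\dots)|_{h=x}$, express the joint probability via Theorem~\ref{thm:airy1}, and apply the Jacobi formula \eqref{eq:derDet} to differentiate the Fredholm determinant in the leading projector $\bar P_{x_0}$. The paper likewise remarks that the conjugation by $U$ is needed for the trace-class justification but suppresses it, and your observation about the sign convention for $\delta_x$ is consistent with the identity $\tfrac{d}{dx}F_{\rm GOE}(2x)=-F_{\rm GOE}(2x)\int du\,B_0(x,u)(I-B_0+\bar P_xB_0)^{-1}(u,x)$ used later in the paper.
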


 Note again that the analogous formula is true for Airy$_2$. We remark that in the trace
 appearing in \eqref{ccd} we should be conjugating by the operator $U$ introduced before
 Proposition \ref{prop:traceclass} to make sure that the operator is trace class. The same
 is true for the calculations that follow. To simplify the argument we will ignore these
 conjugations and skip some details throughout this section, we hope that at this point
 the reader can fill in the necessary arguments.

 \begin{proof}[Proof of Lemma \ref{lem:cond}]
 Note first that
 \begin{align}
   &\pp\!\left(\aipo( t_1)\leq x+y_1 , \ldots, \aipo( t_n)\leq x+y_n\,\middle|\,\aipo(0)=x
   \right)\\
   &\hspace{0.5in}=\frac{1}{2F'_{\rm GOE}(2x)}\p_h\!\left.\pp\big(\aipo(0)\leq h,\,\aipo(
     t_1)\leq x+y_1 , \ldots, \aipo( t_n)\leq x+y_n\big)\right|_{h=x}\\
   &\hspace{0.5in}=\frac{1}{2F'_{\rm GOE}(2x)}\p_h\!\left.\det\!\left(I-B_0 + \Lambda_{
         (0,\mathbf{t}) }^{(x,\mathbf{y}+x)} e^{-t_n\Delta} B_0\right)\right|_{h=x},
 \end{align}
 where we have used the fact that $\pp(\aipo(0)\leq x)=F_{\rm GOE}(2x)$ and Theorem
 \ref{thm:airy1}. Now recall (see \cite{simon}) that if $\{A(\beta)\}_{\beta\geq0}$ is
 family of trace class operators which is Fr\'echet differentiable (in trace class norm)
 at $\beta=h$ then
 \begin{equation}
   \p_h\det(I+A(h))=\det(I+A(h))\tr[(I+A(h))^{-1}\p_hA(h)].\label{eq:derDet}
 \end{equation}
 The result now follows from
 computing the Fr\'echet derivative of $\Lambda_{ (0,\mathbf{t}) }^{(h,\mathbf{y}+x)}$,
 which can be shown without difficulty (after introducing the necessary conjugations) to make sense in
 trace class norm.
 \end{proof}

 \begin{proof}[Proof of Theorem \ref{thm:localBM}]
   We study the last line of \eqref{ccd} and to make it easier to read we call $L=B_0 +
   \Lambda_{ (0,\mathbf{t}) }^{(x,\mathbf{y}+x)} e^{- t_n\Delta} B_0$. Note first of all
   that it is given explicitly by
   \begin{multline}
     \tr\!\left[ \left(I-L\right)^{-1}
       \delta_x e^{ t_1\Delta}\Lambda_{ \mathbf{t} }^{\mathbf{y}+x} e^{-t_n\Delta} B_0 \right]\\
     = \int_{-\infty}^\infty dz\, e^{t_1\Delta }\bar{P}_{x+y_1} \cdots e^{(t_n-t_1)\Delta
     }\bar{P}_{x+y_n}(x,z) \int_{-\infty}^\infty du\, e^{-t_n\Delta } B_0(z,u)
     \left(I-L\right)^{-1}\!(u,x).
   \end{multline}
   Shifting $z$ by $x$ and using the translation invariance of the heat operators we can
   rewrite the trace as
   \[\int _{-\infty}^\infty dz\, e^{t_1\Delta }\bar{P}_{y_1} \cdots e^{(t_n-t_{n-1})\Delta
   }\bar{P}_{y_n}(0,z) \int _{-\infty}^\infty du\, e^{-t_n\Delta } B_0(z+x,u)
   \left(I-L\right)^{-1}\!(u,x).\] If we put in the Brownian scaling $\mathbf{t}\mapsto
   \ep \mathbf{t}$, $\mathbf{y}\mapsto \sqrt{\ep} \mathbf{y}$ we get
   \[\int_{-\infty}^\infty dz\, e^{\ep t_1\Delta }\bar{P}_{\sqrt{\ep}y_1} \cdots
   e^{\ep(t_n-t_1)\Delta }\bar{P}_{\sqrt{\ep}y_n}(0,z) \int _{-\infty}^\infty du\, e^{-\ep
     t_n\Delta } B_0(z+x,u) \left(I-L_\ep\right)^{-1}\!(u,x),\] where $L_\ep$ is defined
   in the obvious way by introducing the Brownian scaling in $L$. Since the heat operators
   are invariant under this scaling we can change $z\mapsto \sqrt{\ep} z$ to see that this
   is equal to
   \[\int_{-\infty}^\infty \, dz e^{t_1\Delta }\bar{P}_{y_1} \cdots e^{(t_n-t_{n-1})\Delta
   }\bar{P}_{y_n}(0,z) \int_{-\infty}^\infty \, du e^{-\ep t_n\Delta }
   B_0(\sqrt{\ep}z+x,u) \left(I-L_\ep\right)^{-1}\!(u,x).\] Combined with
   $\frac{d}{dx}F_{\rm GOE} (2x)= -{F_{\rm GOE} (2x)} \int_{-\infty}^\infty du
   \,B_0(x,u)\left(I-B_0+\bar{P}_x B_0\right)^{-1}\!(u,x) $, which follows easily from
   \eqref{eq:derDet}, we obtain \eqref{i} from this and \eqref{ccd}. Now \eqref{I} goes to 1 as
   $\ep\to0$ by the continuity of Airy$_1$ proved in Theorem \ref{thm:airy1}. On the other
   hand, one can show that $L_\ep$ converges to $L$ as $\ep\to0$ in trace class norm,
   which implies (see \cite{simon}) that $(I-L_\ep)^{-1}\to(I-L)^{-1}$ in the same
   sense. Using this it is not hard to show by the dominated convergence theorem that
   \eqref{II} goes to $1$ as $\ep\to0$. This implies the convergence of the finite
   dimensional distributions to those of Brownian motion, and thus concludes the proof.
 \end{proof}

\printbibliography[heading=apa]

\end{document}